\newtheorem{thm}{Theorem}
\newtheorem{lemma}[thm]{Lemma}
\newtheorem{prop}[thm]{Proposition}
\newtheorem{cor}[thm]{Corollary}
\newtheorem{rem}{Remark}
\newtheorem{defn}[thm]{Definition}
\newtheorem{ass}{Assumption}
\newcommand{\sT}{\mathtt{T}}
\newcommand{\sP}{\mathtt{P}}
\newcommand{\sG}{\mathtt{G}}
\newcommand{\sm}{\mathtt{m}}
\newcommand{\bP}{\mathtt{P}}
\renewcommand{\d}{{\rm d}}
\newcommand{\e}{{\rm e}}
\newcommand*{\norm}[1]{\lVert #1 \rVert}
\newcommand{\cF}{\mathcal{F}}
\newcommand{\cN}{\mathcal{N}}
\newcommand{\cZ}{\mathcal{Z}}
\newcommand{\cE}{\mathcal{E}}
\newcommand{\cP}{\mathcal{P}}
\newcommand{\us}{\mathbf{s}}
\newcommand{\uv}{\mathbf{v}}
\title{Many-to-few for non-local branching Markov process}
\author{Simon C. Harris\thanks{University of Auckland. E-mail: \texttt{simon.harris@auckland.ac.nz}}, \ Emma Horton\thanks{Inria Research Centre Bordeaux. E-mail: \texttt{emma.horton@inria.fr}},   
\ Andreas E. Kyprianou\thanks{University of Bath. E-mail: \texttt{a.kyprianou@bath.ac.uk}} 
\ and
Ellen Powell\thanks{University of Durham. E-mail: \texttt{ellen.g.powell@durham.ac.uk}}
 }
\begin{document}
\maketitle
\begin{abstract}
We provide a many-to-few formula in the general setting of non-local branching Markov processes. This formula allows one to compute expectations of $k$-fold sums over functions of the population at $k$ different times. The result generalises \cite{Many2few} to the non-local setting, as introduced in \cite{YaglomNTE} and \cite{moments}. 
As an application, we consider the case when the branching process is critical, and conditioned to survive for a large time. In this setting, we prove a general formula for the limiting law of the death  time of the most recent common ancestor of two particles selected uniformly from the population at two different times, as $t\to\infty$. Moreover, we describe the limiting law of the population sizes at two different times, in the same asymptotic regime.
%we consider choosing With this in hand, we provide some initial results concerning the spatial behaviour of 2-spine asymptotics, in which the dependency on space becomes largely irrelevant. 
\medskip

\noindent {\bf Key words:} non-local branching processes, many-to-few, spines.

\medskip

\noindent {\bf MSC 2020:} 60J80, 60J25 
\end{abstract}

\section{Introduction}

\subsection{Main results}

Our main result, the so called {\it many-to-few formula}, is a way to rewrite the expectation of a general $k$-fold sum, depending on the entire configuration of a   branching Markov process  at $k$ different times, as an expectation with respect to the behaviour of $k$ distinguished lines of descent under a tilted measure. %Similar tools have been employed to great effect in, for example, \cite{AH, AHbook, YaglomNTE, moments}. 
We generalise the original and well cited main result of  \cite{Many2few}, by allowing for non-local branching, and not requiring the  $k$  individuals to be sampled at the same time.

\medskip

The many-to-few formula generalises the role of the classical spine decomposition for spatial branching processes, which converts expectation identities for additive functionals of spatial branching processes to Feynman-Kac formulae for a single Markov particle trajectory.  The latter has proved to be an important tool in analysing the growth and spread of a rich variety of  branching Markov processes and related models; see for example the monographs \cite{AHbook, Janos, ZS, HK, Bertoinbook} among a wide base of research literature that is to too extensive to exhaustively list here. 
The many-to-few formula has already played an important and similar role to the classical spine decomposition  as a tool to interrogate various questions pertaining to particle correlation that arise in e.g. genealogical coalescent structure, \cite{HJR}, martingale convergence, \cite{SNTE-II}, maximal displacement of extreme particles, \cite{LTZ}, the structure of level sets  for branching Brownian motion, \cite{CHO} and the analysis of certain models from the theory of stochastic genetics, \cite{FRS}. We further remark that the many-to-few formula is  related to recent work pertaining to asymptotic moment convergence in \cite{moments}. 

 % The latter of these two generalisations makes the many-to-few formula better suited to 

\medskip

 %{\color{blue} **history/how this has been useful before: AEK k-moments, SImon's paper, citations of Simon's paper and classic spines**}. 
 
We refrain from attempting to give a precise statement of the many-to-few formula here, deferring instead to Lemma \ref{m2k-ktimes} below,  as we will need to introduce several objects in order for the formula to be understood in a meaningful way. %However, let us point out already that the result holds for the entire class of non-local branching processes introduced in \cite{YaglomNTE} and \cite{moments}. {\color{blue} **explain how it extends previous results - relating back to history**}
We note that, simultaneously to the results we present here a general  branching Markov process setting,  similar ideas have been developed in \cite{FRS}.

\medskip 

Our main motivating application %for the many-to-few formula, Lemma \ref{m2k-ktimes},
 is to understand the limiting genealogy of a so-called \emph{critical} branching Markov process, when  conditioned to survive for an arbitrarily  long time. Our second main result, Proposition \ref{P:split2}, is a general statement in this direction. More precisely, for a critical non-local branching Markov process conditioned to survive until a large time $t$, we provide a precise asymptotic for the death time of the most recent common ancestor of two individuals sampled uniformly from the population at two different times.  In Proposition \ref{prop:joint-conv}, we also describe the limiting law of the population sizes at two different times, in the same asymptotic regime.

\subsection{Set-up and assumptions}\label{SS:set-up}

Let $E$ be a Lusin space. Throughout, will write $B(E)$ for the Banach space of bounded measurable functions on $E$ with norm $\norm{\cdot}$, $B^{+}(E)$ for non-negative bounded measurable functions on $E$ and $B^{+}_1(E)$ for the subset of functions in $B^{+}(E)$ which are uniformly bounded by unity.

\medskip

We consider a spatial branching process in which, given their point of creation, particles evolve independently according to a Markov process, $(\xi, \mathbf{P})$, which can be characterised via the semigroup $\sP_t[f](x) = \mathbf{E}_x[f(\xi_t)]$, for $x \in E$, $t \ge 0$ and $f \in B^{+}_1(E)$. In an event which we refer to as `branching', particles positioned at $x$ die at rate $\beta(x)$ where $\beta\in B^+(E)$ and instantaneously, new particles are created in $E$ according to a point process. The configurations of these offspring are described by the random counting measure
\begin{equation}
\mathcal{Z}(A) = \sum_{i = 1}^N \delta_{x_i}( A), 
\label{Z}
\end{equation}
for Borel $A$ in $E$. The law of the aforementioned point process may depend on $x$, the point of death of the parent, and we denote it by $\mathcal{P}_x$, $x\in E$, with associated expectation operator given by $\mathcal{E}_x$, $x\in E$.  This information is captured in the so-called branching mechanism
\begin{equation}
  \sG[f](x) :=  \beta(x)\mathcal{E}_x\left[\prod_{i = 1}^N f(x_i) - f(x)\right], \qquad x\in E, \, f\in B^+_1(E).
  \label{linearG}
\end{equation}
Without loss of generality we can assume that $\mathcal{P}_x(N =1) = 0$ for all $x\in E$ by viewing a branching event with one offspring as an extra jump in the motion. On the other hand, we do allow for the possibility that $\mathcal{P}_x(N =0)>0$ for some or all $x\in E$.

\medskip

Moreover, we do not need $\bP$ to have the Feller property, and it is not necessary that $\bP$ is conservative. That said, if so desired, we can append  a cemetery state $\{\dagger\}$ to $ E$, which is to be treated as an absorbing state, and regard $\bP$ as conservative on the extended space $E\cup\{\dagger\}$, which can also be treated as a Lusin space. Equally, we can extend $\sG$ to $E\cup\{\dagger\}$ by defining it to be zero on $\{\dagger\}$, that is no branching activity on the cemetery state. 

\medskip

Henceforth we refer to this spatial branching process as a $(\sP, \sG)$-branching Markov process. It is well known that if we arbitrarily choose an order for the particles at each branching event then we may denote the configuration of particles at time $t$ by $\{x_1(t), \ldots, x_{N_t}(t)\}$ (where $N_t$ denotes the number of particles alive at time $t$) and we can also associate an Ulam-Harris label \[v_i(t)\in \Omega:=\{\emptyset\}\cup \bigcup_{n\ge 1} \mathbb{N}^n\] to the $i$-th particle at time $t$. For $v, w \in \Omega$, we write $v \preceq w$ to mean that $v$ is an ancestor of $w$, which means there exists $u \in \Omega$ such that $vu = w$. {Moreover, we write $v \prec w$ to mean that $v \preceq w$ in the strict sense, that is, the possibility that $v=w$ is excluded.} We say that $v, w \in \Omega$ are siblings if there exists $u \in \Omega$ and $i \neq j$ such that $v = ui$ and $w = uj$.

\medskip

The branching Markov process can be described via the co-ordinate process $X= (X_t, t\geq0)$ in the space of {counting} measures on $E\times \Omega$ with non-negative integer total mass, denoted by $M(E\times \Omega)$, where
\[
X_t (\cdot) = \sum_{i =1}^{N_t}\delta_{(x_i(t),v_i(t))}(\cdot), \qquad t\geq0.
\]
In particular, $X$
is Markovian in  $M(E\times \Omega)$. Its probabilities will be denoted $\mathbb{P}: = (\mathbb{P}_{\delta_x}, x\in E)$ where for $x\in E$, $\mathbb{P}_{\delta_x}$ denotes the law of the process starting from $\delta_{(x,\emptyset)}\in M(E\times \Omega)$. 
%With this notation in hand, %it is worth noting that the independence that is manifest in the definition of branching events and movement implies that 
%if 
%We define, 
%\begin{equation}
%\sv_t[f](x) = \mathbb{E}_{\delta_x}\left[\prod_{i = 1}^{N_t} f(x_i(t))\right], \qquad f\in B^+_1(E), \, t\geq 0, \, x \in E,
%\label{nonlin}
%\end{equation}
%then for $\mu\in M(E)$ given by $\mu = \sum_{i =1}^n\delta_{y_i}$, we have
%\begin{equation}
%\label{MBP}
%\mathbb{E}_{\mu}\left[\prod_{i = 1}^{N_{t}} f(x_i(t))\right] = \prod_{i = 1}^{n}\sv_t[f](y_i), \qquad t\geq 0.
%\end{equation}
%then for $f\in B_1^+(E)$ and  $x\in E$, 
%\begin{equation}
%\sv_t[f](x) = \hat\sP_t[f](x) + \int_0^t \sP_s\left[ \sG[\sv_{t-s}[f]]\right](x)\d s, \qquad t\geq0, 
%\label{nonlinv}
%\end{equation}
%where $\hat\sP_t$ is a slight adjustment of $\sP$, which returns the value $1$ on the event that $\xi$ is killed. 

%\smallskip

%The proof is classical  and follows standard reasoning for semigroup integral equations e.g. as  in  \cite{SNTE-I, SNTE-II}: First  conditioning $\sv$ on the time of the first branching event, then using the principle of transferring between multiplicative and additive potentials  in the resulting integral equation (cf. Lemma 1.2, Chapter 4 in \cite{Dynkin2}) shows that \eqref{nonlinv} holds. Gr\"onwall's Lemma and the fact that $\beta\in B^+(E)$ ensure that the relevant integral equations have unique solutions. 

\medskip

%Recalling that $\beta \in B^+(E)$, 
Under the additional assumption that $\mathrm{sup}_{x \in E} \,\mathcal{E}_x(N) < \infty$, where we recall that $N$ is the (random) number of offspring produced at a branching event, we define the linear semigroup
\[
\sT_t[f](x) := \mathbb{E}_{\delta_x}[X_t[f]] := \mathbb{E}_{\delta_x}\left[\sum_{i = 1}^{N_t} f(x_i(t))\right], \quad f\in B^+(E).
\]
%can be written in an alternative form via a many-to-one formula. Indeed, suppose that we define 
%\[
%\sm[f](x) := \mathcal{E}_x\left[\sum_{i = 1}^N f(x_i)\right], \quad B(x) := \beta(x)(\sm[1](x) - 1)
%\]
%and introduce a new Markov process $\hat{\xi} = (\hat{\xi}_t, t\geq0)$ which evolves as the process $\xi$ but at rate $\beta(x)\sm[1](x)$ the process is sent to a new position in $E$, such that for all Borel $A\subset E$, the new position is in $A$ with probability  $\sm[\mathbf{1}_A](x)/\sm[1](x)$.  We will refer to the latter as {\it extra jumps}. Note the law of the extra jumps  is well defined thanks to the assumption that $\sup_{x \in E}\sm[1](x) < \infty$. Accordingly we denote the probabilities of $\hat\xi$ by $(\hat{\mathbf{P}}_x, x\in E)$. Then the many-to-one formula is as follows.

%\begin{lemma}\label{M21} 
%For $f\in B^+(E)$, $t\geq 0$ and $x \in E$, we have 
%\begin{equation}
%\sT_t [f](x) = \hat{\mathbf{E}}_x\left[\exp\left(\int_0^t B(\hat\xi_s)\d s\right) f(\hat\xi_t)\right].
%\label{firstextrajump}
%\end{equation}
%\end{lemma}

%Again, the proof is fairly standard and we refer the reader to \cite{SNTE-I, SNTE-II} for how to prove the above result. 

%\medskip

%There is also another many-to-one formula for $\sT_t$ given via the so-called spine decomposition. 

Now let us introduce an assumption, that we will use throughout the article unless stated otherwise. We will use the notation 
\[
\langle f, \mu \rangle := \int_E f(x)\mu(\d x), \quad f \in B(E), \mu \in M(E),
\]
where $M(E)$ is the set of finite measures on $E$.

\begin{ass}\label{ass1}For the Markov process $(\xi,\mathbf{P})$, we assume that 
\begin{enumerate}
\item[(a)] it  admits a c\`adl\`ag modification;
\item[(b)] there exists an eigenvalue $\lambda \in \mathbb R$ and a corresponding right eigenfunction $\varphi \in B^+(E)$ and finite left eigenmeasure $\tilde\varphi$ such that, for $f\in B^+(E)$,
\[
\langle \sT_t[\varphi] , \mu\rangle = {\rm e}^{\lambda t}\langle{\varphi},{\mu}\rangle 
\text{ and } 
\langle{\sT_t[f] },{\tilde\varphi} \rangle= {\rm e}^{\lambda t}\langle f, {\tilde\varphi}\rangle,
\]
 for all $\mu\in M(E)$. 
\end{enumerate}
\end{ass}

The first part of the above assumption is a regularity assumption on the Markov process $(\xi,\mathbf{P})$, which ensures that we can use the theory of martingale changes of measure. The second is a Perron Frobenius assumption that ensures the existence of the leading eigenvalue and corresponding eigenfunctions.

\subsection{Outline}
The many-to-few formula, Lemma \ref{m2k-ktimes}, will ultimately allow us to express  expectations of general $k$-fold sums depending on the entire branching process under $\mathbb{P}$, in terms of an expectation with respect to $k$ so-called \emph{spine particles} under a different measure $\mathbb{Q}^k$. In Section \ref{S:com}, we define this measure $\mathbb{Q}^k$, introduce the notion of spines, and give an explicit expression for the Radon-Nikodym derivative of $\mathbb{Q}^k$, with respect to the original law of our branching process plus  some uniformly chosen marked lines of descent (we call this measure $\mathbb{P}^k$). We then state and prove our main result, the many-to-few formula, in Section \ref{S:m2few}. We also explain some special cases in which the formula simplifies nicely. Section \ref{S:application} is devoted to our main application, which is to derive some two-point asymptotics for the geneologies of critical branching processes, when they are conditioned to survive for a large time $t$. More precisely, we provide an asymptotic (as $t\to \infty$) for the death time of the most recent common ancestor of two particles sampled uniformly from the population at two different times.

\section{Spines, martingales and changes of measure}\label{S:com}

In this section, we introduce two measures under which  our class of  branching Markov process additionally  identifies $k$ distinguished genealogical lines of descent, or {\it spines}. %which may possibly overlap. 
The first  of these two measures  is a simple adaptation of  the original law. The second measure is identified via a change of measure with respect to a certain multiplicative martingale.

% will extend the notion of the many-to-few representation for the $k$-th moment of the branching process, as developed in~\cite{Many2few}, to the class of processes introduced in the previous section. {\color{blue} **I'm not sure the previous sentence makes much sense?**} We will first extend the notion of branching processes to branching processes with marked individuals or {\it spines}. 

\subsection{Definition of the measures $\mathbb{P}^k$ and $\mathbb{Q}^k$}\label{SS:pkqk}

\subsubsection{Definition of $\mathbb{P}^k$}

We first introduce a measure $\mathbb{P}^k$ on the set of processes taking values in $M(E\times \Omega \times \mathcal{P}(\{1, \ldots, k\}))$, the space of counting measures on $E\times \Omega \times \mathcal{P}(\{1, \ldots, k\})$,  where {$\mathcal{P}(\{1, \ldots, k\})$ is the set of subsets of $\{1,\dots, k\}$.} %At time $t \ge 0$, an atom at $(x, v, \{b_1, \ldots, b_j\})$ means there is a particle at position $x \in E$ with label $v \in \Omega$ carrying $j$ marks, which are denoted by $\{b_1, \ldots, b_j\}$.
For convenience, let $\tilde X$ denote the branching process on the space $M(E \times \Omega \times \mathcal{P}(\{1, \ldots, k\}))$, that is 
\[
  \tilde X_t = \sum_{i = 1}^{N_t} \delta_{(x_i(t), v_i(t), \mathbf{b}_i(t))},
\]
where $\mathbf{b}_i(t)\in\mathcal{P}(\{1,\dots, k\})$ denotes the set of marks carried by the $i$-th particle alive at time $t$. Whenever $\mathbf{b}_i(t)\neq\emptyset$, we refer to the individual $i$ as a {\it spine}. In that case, we say that the spine {\it carries $|\mathbf{b}_i(t)|$ marks}.
Given $\tilde X$, define $X$ to be its projection onto $M(E \times \Omega)$.  
With this notation in hand, we let $(\cF_t, t \ge 0)$ denote the natural filtration generated by $X$ and
$(\cF_t^k, t \ge 0)$ denote the natural filtration generated by $\tilde X$.

\medskip

Then, we have the following description of the measure $\mathbb{P}^k$.

\begin{defn} The construction of $\tilde{X}$ under $\mathbb{P}^k$ goes as follows.
\begin{enumerate}
\item We start with a single particle at $x \in E$ which carries $k \ge 1$ marks.
\item All particles move according to the semigroup $\sP$, independently of each other given their birth times and configurations.
\item Let $\xi_t^i$ denote the position of the particle that carries the mark $1 \le i \le k$ at time $t \ge 0$.
\item A particle at $y \in E$ {carrying $j$ marks}  $b_1 < b_2 < \dots < b_j$, dies at rate $\beta(y)$ and simultaneously produces a random number of new particles according to $(\cZ, \mathcal{P}_y)$. The $j$ marks each choose a particle to follow independently and uniformly from the $N = \langle 1, \cZ\rangle$ available particles.
\item In the event that a particle carrying  $j > 0$ marks dies and is replaced by $0$ offspring, it is sent to the cemetery state, along with its marks.
\end{enumerate}
\end{defn}
\noindent Note that the above definition of $\mathbb{P}^k$ is such that $X$ has the same law under both $\mathbb{P}^k$ and $\mathbb{P}$. 

\subsubsection{The measure $\mathbb{Q}^k$}

We will now introduce a second measure, $\mathbb{Q}^k$, under which particles not carrying any marks evolve { in the same manner as particles under $\mathbb{P}$}, while spines (that is, particles carrying marks) evolve differently. {In the next section we will show that $\mathbb{Q}^k$ can be defined via a change of measure of $\mathbb{P}^k$.} Before describing the process, we {need to} introduce some more notation. 

\medskip

Suppose that we are given a functional $(\zeta(\cdot, t), t \ge 0)$ such that $\zeta(\xi, t)$ is a non-negative unit-mean martingale with respect to the natural filtration of the Markov process $(\xi_t, t \ge 0)$ with semigroup $(\bP_t, t \ge 0)$. We will assume that $\zeta$ takes the value $0$ whenever $\xi_t = \dagger$.
Now for $k,n\in \mathbb{N}$, define 
\begin{equation}\label{eq:new_offspring}
\langle \varphi, \cZ \rangle_{k, n} ={\bf 1}_{(n\leq N)} \sum_{[k_1, \dots, k_N]_k^n}{{k}\choose{k_1, \dots, k_N}}\prod_{i : k_i > 0}\varphi(x_i),
\end{equation}
where $(x_i, i = 1, \dots, N)$ are as in \eqref{Z} and $[k_1, \dots, k_N]_k^n$ is the set of non-negative integer $N$-tuples $(k_1, \dots, k_N)$ such that $k_1 + \dots + k_N = k$ and exactly $n$ of the $k_i$s are positive. If $\mathcal{Z}$ corresponds to the offspring of a particle carrying $k$ marks, one can think of a single term in the sum $\langle \varphi, \cZ\rangle_{k, n}$ as a weight associated to the event that the $k$ marks are distributed among the offspring, by giving exactly $k_i$ marks to the $i$th offspring particle, $i=1,\ldots, N$ (see below for a more precise interpretation).
With this notation in hand, now define
\begin{align}
\langle \varphi, \cZ \rangle_k(x) &:= \sum_{1\le n \le k}   \varphi(x)^{-n} \langle \varphi, \cZ \rangle_{k, n}
%&:=\sum_{1\le n \le k} \varphi(x)^{-n}  \sum_{k_1+\dots + k_n=k} \sum_{i_1\ne i_2\ne \dots \ne i_n} \varphi(x_{i_1})\dots \varphi(x_{i_n}) 
\label{multi-offspring}
\end{align}
and set {${\sm}_{k}(x)=\cE_x(\langle \varphi,\cZ\rangle_k(x))$}.
Note that in the case of local branching, {$ \langle \varphi, \cZ \rangle_k \equiv N^k$} for any $x \in E$.

%Let 
%\[
%\sm_j[f](x) := \mathcal{E}_x[\langle f, \mathcal{Z}\rangle^j]
%\] 
%denote the $j$-th moment of the offspring distribution and 
%\[
%\alpha_j(x) := \frac{\beta(x)}{\varphi^j(x)}(\sm_j[\varphi](x) - \varphi^j(x)).
%\]

%We would now like to extend the notion of the spine introduced in the previous section by defining a new measure under which the spines $\psi^1, \dots, \psi^k$ now have a biased motion, altered branching rate and biased offspring distribution. To this end, let us introduce the measure $\mathbb{Q}_{\delta_x}$ via the following construction. 

\begin{defn} The construction of $\tilde{X}$ under $\mathbb{Q}^k$ goes as follows.

\begin{enumerate}
\item Again, we begin with one particle at $x \in E$ carrying all the marks $\{1, \dots, k\}$. In what follows, particles carrying marks are referred to as spines.
%\item We let $\xi_t^i$ denote the position of the particle with mark $i$ at time $t$, $i = 1, \dots, k$. 
\item Any spine (that is, any particle carrying any number of marks) moves according to the semigroup
\begin{equation}
 \sP_t[{g}](x) :=\frac{1}{\zeta(\xi, 0)}\mathbf{E}_{x}\left[\zeta(\xi, t)g(\xi_t) \right], \qquad x \in E,
 \label{k-motion-bias}
\end{equation}
where $(\xi_s,s\geq0)$ denotes the motion.
\item Suppose a spine carries marks $\mathbf{b} = (b_1,\dots,b_j)$. Then for each $1 \le n \le j$, an independent exponential clock rings at rate $\beta(x) {\sm}_{j,n}(x),$ %into $n$ distinct spines at rate
\[
{\sm}_{j,n}(x): = \varphi(x)^{-n} \mathcal{E}_x(\langle \varphi, \cZ\rangle_{j, n}), \qquad x\in E.\]
When the first of these clocks rings, a branching event occurs, and if it is the $n$th clock, the $j$ marks carried by the parent will be given to exactly $n$ distinct offspring particles. 
\medskip 

More precisely, if the first clock to ring is the $n$th one, the positions of the offspring are described by $\cZ$ with law $\cP_x^{(j, n)}$ defined by
\begin{equation}
 \frac{\d \cP_x^{(j, n)}}{\d \cP_x}:=\frac{\langle \varphi, \cZ\rangle_{j, n}}{\mathcal{E}_x(\langle \varphi, \cZ\rangle_{j, n})}. 
\label{bias-offspring}
\end{equation}
Then given $\mathcal{Z}$, for each $(k_1, \dots, k_N) \in [k_1, \dots, k_N]^n_{j}$, the probability that the $i$th offspring particle receives exactly $k_i$ marks for each $1\le i \le N$, is given by 
$$\frac{ {{k}\choose{k_1, \dots, k_N}} \prod_{i : k_i > 0}\varphi(x_i)}{\langle \varphi, \cZ\rangle_{j, n}}.$$
On this event, the way that the marks $b_1, \dots, b_j$ are distributed among the offspring is that such that any valid configuration (that is, satisfying the constraint that exactly $k_i$ marks are given to offspring particle $i$ for each $1\le i \le N$) has the same probability: $$\frac{1}{{{k}\choose{k_1, \dots, k_N}}}. $$
%{\color{blue}AEK: This is really clunky -  somehow needs to  re-write the birthing logic in 3 mentioning that, given a partition of spinal the spinal mass $j$ according to $(k_1, \dots, k_N) \in [k_1, \dots, k_N]^n_{\color{red}j}$, if the mark of the parent is given by $\mathbf{b} = (b_1,\dots,b_j)$, then they are distributed among the $n\leq j$ sub-blocks of positive spinal mass in an exchangeable (uniform) manner. The combinatoric of the latter is somehow mixed up with \eqref{eq:new_offspring}?
%}

\item Particles that do not carry marks issue independent copies of $(X, \mathbb{P})$. Marked particles then continue from Step 2.  

\end{enumerate}
\end{defn}

\begin{rem}\label{rem:alt-description}\rm
An alternative description of the third step above, is in terms of the \emph{total} branching rate. Namely, suppose a spine carries marks $\mathbf{b} = (b_1,\dots,b_j)$. Then it branches at rate $\beta(x) \sm_j(x)$, and on such a branching event, the offspring  positions are described by $\mathcal{Z}$ whose law is weighted (with respect to $\mathcal{P}_x$) by $\langle \varphi, \mathcal{Z}\rangle_j / \sm_j(x)$. Moreover, given $\mathcal{Z}$ (and the position $x$ of the spine before branching), any particular allocation of the marks $b_1,\ldots, b_j$ among the $N$ offspring (that is, a partition $S_1, \dots, S_N$ of $\{b_1, \dots, b_j\}$) has probability equal to $\langle \varphi, \mathcal{Z} \rangle_j^{-1} \textstyle \prod_{i: |S_i|>0} (\varphi(x_i)/\varphi(x)) $.
However, we prefer to highlight the decomposition according to $n$ (as given in the description of the measure above), since this will be the one we use in practice.
\end{rem}

\begin{rem}
	\rm
There are many different variants of the measure $\mathbb{Q}^k$ that we could have described,  that would also be related to  $\mathbb{P}^k$ by a martingale change of measure, and would also lead to a many-to-few type formula. Our specific choice of $\mathbb{Q}^k$ is motivated by our main application: describing the genealogical structure of the branching process when it is conditioned on survival. In particular, when we use our many-to-few formula for this purpose, we get an extremely simple structure - see \eqref{eq:natural_martingale}. Note that in the case of local branching, this measure $\mathbb{Q}^k$ is identical to that which appears in \cite{Many2few}. When $k=1$, and we make a particular choice for $\zeta$, this also agrees with the ``spine decomposition'' given in \cite{YaglomNTE}.
\end{rem}

\subsection{The martingale change of measure}\label{SS:mgale}
We will now explain how the measures $\mathbb{P}^k$ and $\mathbb{Q}^k$ are connected via a change of measure. 
Let us first introduce some further notation.
\medskip 

Given $v \in \Omega$, note that $X_t(E \times \{v\}) = 0$ except on some unique (possibly empty) interval $[\sigma_v, \tau_v)$, on which $X_t(E \times \{v\}) = 1$.  We will often use the notation $\tau_v^-$ for the left limit of $\tau_v$. If $t \in [\sigma_v, \tau_v)$, there exists a unique $X_v(t) \in E$ such that $X_t(X_v(t) \times \{v\}) = 1$ and a unique $\mathbf{b}_v \in \mathcal{P}(\{1, \dots, k\})$ such that $\tilde{X}_t(E \times \{v\} \times \mathbf{b}_v) = 1$ for all $t \in [\sigma_v, \tau_v)$. Heuristically, $\sigma_v$ and $\tau_v$ are the birth and death times of particle $v$, respectively, $X_v(t)$ represents its position at time $t$ during its lifetime, and ${\bf b}_v$ represents the set of marks it carries. We further set {$D_v = |\mathbf{b}_v|$} to be the number of marks carried by the particle with label $v$. For each $v \in \Omega$, let $N_v$ denote the number of offspring produced by $X_v(\tau_v)$.

\medskip

Set $\mathcal{N}_t := \{v \in \Omega : t \in [\sigma_v, \tau_v)\}$ so that $N_t = |\mathcal{N}_t|$. For each $t \ge 0$ and $j = 1, \dots, k$, let $\psi_t^j$ and $\xi_t^j$ denote the unique elements of $\Omega$ and $E$, respectively, such that there exists $\mathbf{b} \in \mathcal{P}(\{1, \dots, k\})$ {with $j \in \mathbf{b}$} and $\tilde{X}_t(\xi_t^j \times \{\psi_t^j\} \times \mathbf{b}) = 1$. Finally, let us define the skeleton at time $t$ to be $S_k(t):= \{\psi_s^1, \ldots, \psi_s^k : s \le t\}$, so that $S_k(t)$ is a subset of labels in the tree. For $v\in S_k(t)$, let $M_v$ denote the number of distinct offspring of $v$ that are given a mark (that is, the number of distinct spine offspring).

\begin{defn}
Define the ${\mathcal{F}}_t^k$-adapted process $({W}_t^k, t \ge 0)$ by
\begin{align}
{W}_t^k 
:=  & %\mathbf{1}_{\{\zeta(X_v,t)>0\, \forall v\in \cN_t\}} 
\prod_{v \in S_k(t)}\frac{\zeta(X_v, \tau_v^- \wedge t)}{\zeta(X_v, \sigma_v)}  
\prod_{v \in S_k(t)}{\rm e}^{-\int_{\sigma_v}^{\tau_v \wedge t}\beta(X_v(s))({\sm}_{D_v}(X_v(s)) - 1)\d s} 
%\prod_{v \in S_k(t) \cap \mathcal{N}_t}{\rm e}^{-\int_{\sigma_v(t)}^{t}\beta(X_v(s))({\sm}_{D_v}(X_v(s)) - 1)\d s} 
\notag\\
		&\times \prod_{v \in S_k(t) \backslash \{\emptyset\}} {\varphi(X_v(\sigma_v))} \prod_{v \in S_k(t) \backslash \mathcal{N}_t}\frac{N_v^{D_v}}{\varphi(X_v(\tau_v^-))^{M_v}}.
		\label{def-Wk}
	\end{align}
	
%Further, define the $\cF_t$-adapted process $Z_t^k$ by \ellen{(here is $D_v=D_{\mathbf{u}}(v)$? Is $\mathcal{Z}_v$ the set of positions of offspring of $v$?)}
%\begin{align}
%  Z_t^k &:= \sum_{u_1, \dots, u_k \in \cN_t}\mathbf{1}_{\{\zeta(\xi^i, t) > 0 \forall i = 1, \dots, k\}}\prod_{v \in S_k(t)}
%\frac{\zeta(X_v, \tau_v^-(t))}{\zeta(X_v, \sigma_v(t))} {\rm e}^{-\int_{\sigma_v(t)}^{\tau_v(t)}\alpha_{D_v}(X_v(s))\d s}\notag \\
%&\qquad \qquad \times \prod_{v \in S_k(t) \backslash  \mathcal{N}_t}{\varphi(X_v(\tau_v^-))^{-D_v}}\prod_{v \in S_k(t) \backslash \{\emptyset\}}\varphi(X_v(\sigma_v))^{D_v}
%\end{align}
\end{defn}

\begin{rem}\rm
	Henceforth, for $s\le t$ and $x, y \in E$, we set $\zeta(x,t)/\zeta(y,s)=1$ whenever $\zeta(x,t)=\zeta(y,s)=0$.
\end{rem}

\begin{rem}\rm
We emphasise that then when a branching event occurs and the spines all choose the same particle to follow, this is included as an element of the skeleton, $S_k(t)$. 
\end{rem}

\begin{rem}\label{rem:alt-mg}\rm
Note that we may equivalently write
\begin{align}
W_t^k
% = &\prod_{v \in S_k(t)}\frac{\zeta(X_v, \tau_v^- \wedge t)}{\zeta(X_v, \sigma_v)}  \notag \\
%& \times \prod_{v \in S_k(t) \backslash \cN_t}{\sm_{D_v}(X_v(\tau_v^-))} {\rm e}^{-\int_{\sigma_v}^{\tau_v}\beta(X_v(s))(\sm_{D_v}(X_v(s)) - 1)\d s}
%%{\varphi(X_v(\tau_v^-))^{D(v)}}
%\prod_{v \in \cN_t}{\rm e}^{-\int_{\sigma_v}^{t}\beta(X_v(s))(\sm_{D_v}(X_v(s)) - 1)\d s}  \notag\\
%& \times\prod_{v \in S_k(t) \backslash \cN_t}\frac{\langle \varphi, \cZ_v\rangle_{D_v}(X_v(\tau_v^-))}{\sm_{D_v}(X_v(\tau_v^-))} \notag\\
%&\times 
%\prod_{v \in S_k(t) \backslash\cN_t}\frac{N_v^{D_v}}{\varphi(X_v(\tau_v^-))^{M_v} \, \langle \varphi, \cZ_v\rangle_{D_v}(X_v(\tau_v^-))} \prod_{v \in S_k(t) \backslash \{\emptyset\}}  \varphi(X_v(\sigma_v))\notag \\
= &\prod_{v \in S_k(t)}\frac{\zeta(X_v, \tau_v^- \wedge t)}{\zeta(X_v, \sigma_v)}  \notag \\
& \times \prod_{v \in S_k(t) \backslash \cN_t}{\sm_{D_v}(X_v(\tau_v^-))} {\rm e}^{-\int_{\sigma_v}^{\tau_v}\beta(X_v(s))(\sm_{D_v}(X_v(s)) - 1)\d s}
%{\varphi(X_v(\tau_v^-))^{D(v)}}
\prod_{v \in \cN_t}{\rm e}^{-\int_{\sigma_v}^{t}\beta(X_v(s))(\sm_{D_v}(X_v(s)) - 1)\d s}  \notag\\
& \times\prod_{v \in S_k(t) \backslash \cN_t}\frac{\langle \varphi, \cZ_v\rangle_{D_v}(X_v(\tau_v^-))}{\sm_{D_v}(X_v(\tau_v^-))} \notag\\
&\times 
\prod_{v \in S_k(t) \backslash \{\emptyset\}} \frac{N_{p_v}^{D_{p_v}}}{\langle \varphi, \cZ_{p_v}\rangle_{D_{p_v}}(X_v(\tau_{p_v}^-))} \frac{\varphi(X_v(\sigma_v))}{\varphi(X_v(\tau_{p_v}^-))^{M_{p_v}}},\notag
\end{align}
where $p_v$ is the label of the parent of particle with label $v$. The above decomposition holds since 
\begin{align*}
  \prod_{v \in S_k(t) \backslash\cN_t}&\frac{N_v^{D_v}}{\varphi(X_v(\tau_v^-))^{M_v} \, \langle \varphi, \cZ_v\rangle_{D_v}(X_v(\tau_v^-))} \prod_{v \in S_k(t) \backslash \{\emptyset\}}  \varphi(X_v(\sigma_v))\\
  &= \prod_{v \in S_k(t) \backslash \{\emptyset\}} \frac{N_{p_v}^{D_{p_v}}}{\langle \varphi, \cZ_{p_v}\rangle_{D_{p_v}}(X_v(\tau_{p_v}^-))} \frac{\varphi(X_v(\sigma_v))}{\varphi(X_v(\tau_{p_v}^-))^{M_{p_v}}},
\end{align*}
by noting that in the product on the left-hand side of the above equality, each of the terms in the first product is a parent of an element in the second product. 
Using Remark \ref{rem:alt-description}, one can see that each of the terms above describes a change of measure with respect to $\mathbb{P}^k$ for, in order: the motion; branch rate; offspring distribution; and selection of the spine particles immediately after a branching event. These changes of measure account for the differences between the pathwise constructions of the measures $\mathbb{P}^k$ and $\mathbb{Q}^k$. Indeed, we have the following result. 

\end{rem}

\begin{prop}\label{lem-mg}
For $x \in E$, $({W}_t^k, t \ge 0)$ is a martingale. Define
\begin{equation}
\frac{\d \widetilde{\mathbb Q}_{\delta_x}^k}{\d{\mathbb P}_{\delta_x}^k}\Bigg|_{\mathcal{F}_t^k} = {W}_t^k,  \qquad t \ge 0, \, x \in E.
\label{Qtilde}
\end{equation}
Then 
% and $\widetilde{\mathbb{Q}}_{\delta_x}^k$ has the branching property
for all $x \in E$, $\mathbb{Q}_{\delta_x}^k= \widetilde{\mathbb{Q}}_{\delta_x}^k$.
\end{prop}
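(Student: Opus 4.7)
The plan is to read $W^k_t$, in the equivalent form of Remark \ref{rem:alt-mg}, as the product of four Girsanov-type factors, each implementing one of the four differences between the pathwise constructions of $\mathbb{P}^k$ and $\mathbb{Q}^k$: (i) a $\zeta$-bias of each spine's motion; (ii) an exponential compensator shifting the total branching rate of a spine carrying $j$ marks from $\beta$ to $\beta\sm_j$; (iii) a reweighting of the offspring law from $\mathcal{P}_x$ to $\mathcal{P}_x^{(j,n)}$ at a branching event that produces exactly $n$ marked children; and (iv) a reweighting of the uniform mark-assignment rule (used under $\mathbb{P}^k$) to the $\varphi$-weighted rule of $\mathbb{Q}^k$. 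Both the martingale property of $W^k$ and the identification $\widetilde{\mathbb{Q}}^k=\mathbb{Q}^k$ follow together from this decomposition.

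For the martingale property I would induct along the ordered sequence $0=T_0<T_1<T_2<\cdots$ of branching times of skeleton particles. It suffices to check, for each $i$ and each bounded $\mathcal{F}^k_{T_i}$-measurable $\Psi$,
\[
\mathbb{E}^k_{\delta_x}\bigl[\Psi\,W^k_{t\wedge T_{i+1}}\bigr] = \mathbb{E}^k_{\delta_x}\bigl[\Psi\,W^k_{T_i}\bigr],\qquad t\ge T_i,
\]
together with standard uniform integrability on compact intervals. On $[T_i,T_{i+1})$ the only $t$-dependent part of $W^k_t$ is the product over $v\in\mathcal{N}_t$ of $\zeta(X_v,t)/\zeta(X_v,\sigma_v)$ multiplied by the exponential compensator; conditional on $\mathcal{F}^k_{T_i}$, spines under $\mathbb{P}^k$ are conditionally independent copies of $(\xi,\mathbf{P})$, so the $\zeta$-ratios form a unit-mean martingale by the standing assumption, while the compensator is predictable and designed to cancel against the jump picked up at $T_{i+1}$.

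At $T_{i+1}$, if the branching spine $v$ carries $j$ marks, I would condition on $\mathcal{F}^k_{T_{i+1}^-}$ and take the $\mathbb{P}^k$-conditional expectation of the new jump factor in $W^k$. Under $\mathbb{P}^k$, offspring positions follow $\mathcal{P}_{X_v(T_{i+1}^-)}$ and the $j$ marks are reassigned by independent uniform choices; the multinomial identity $\sum_{n=1}^j\sum_{(k_1,\dots,k_N)\in[k_1,\dots,k_N]_j^n}\binom{j}{k_1,\dots,k_N}=N^j$ together with the definition $\sm_j(x)=\mathcal{E}_x(\langle\varphi,\cZ\rangle_j)$ collapses this conditional expectation to exactly the value needed to offset the predictable exponential on $[T_i,T_{i+1}]$. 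The bookkeeping in this step --- correctly summing over the choice of $n$ and the mark partition while tracking the $\varphi$-weights at each offspring position --- is the main technical obstacle.

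For the identification of the measure, I would then read each of the four factors in Remark \ref{rem:alt-mg} as a change of measure in its own right: (a) the $\zeta$-ratios produce the biased motion \eqref{k-motion-bias} for each spine; (b) $\prod\sm_{D_v}$ together with its compensator resets the branching rate of a $j$-spine to $\beta\sm_j=\beta\sum_{n=1}^j\sm_{j,n}$, which by a standard competing-exponentials argument is precisely the superposition of the $j$ independent clocks of rate $\beta\sm_{j,n}$ in step~3 of the $\mathbb{Q}^k$ construction; (c) restricted to the event that clock $n$ rings, the factor $\langle\varphi,\cZ\rangle_j/\sm_j$ reduces to $\varphi(x)^{-n}\langle\varphi,\cZ\rangle_{j,n}/\sm_{j,n}$, which is exactly the Radon--Nikodym density \eqref{bias-offspring}; and (d) the remaining factor involving $N_v^{D_v}$, $\varphi(X_v(\tau_v^-))^{M_v}$ and $\prod_w\varphi(X_w(\sigma_w))$ reweights the uniform mark allocation to the $\varphi$-weighted one described in Remark \ref{rem:alt-description}. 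Each piece corresponds to exactly one ingredient of the $\mathbb{Q}^k$-definition, forcing $\widetilde{\mathbb{Q}}^k=\mathbb{Q}^k$.
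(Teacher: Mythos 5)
Your proposal is correct and follows essentially the same route as the paper's own proof: reduce, by the Markov property, to verifying the change of measure for a single spine segment up to and including its first branching event, and check that each Girsanov factor (the $\zeta$-bias of the motion, the $\beta(\sm_j-1)$ compensator for the branch rate, the $\langle\varphi,\cZ\rangle_{j,n}$-reweighting of the offspring law, and the $\varphi$-weighted mark allocation) is compensated appropriately under $\mathbb{P}^k$. The paper carries out this verification explicitly at the first branch time $T_1$ while you phrase it as an induction along the skeleton's branching times, but the computation at each step --- in particular the multinomial/$\varphi$-weighted sum producing $\langle\varphi,\cZ\rangle_{D_v}(x)$ and hence $\sm_{D_v}(x)$ in expectation --- is the same.
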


\begin{proof}
 In the spirit of \cite{CR}, and the proofs of Proposition 11 and Theorem 12 in \cite{HHK}, %(which encapsulates a different category of result), 
	it suffices to demonstrate that the   change of measure holds for the behaviour of the initial particle, up to and including its branching event. Thereafter, the Markov property ensures that the result is true in general.
\medskip

To this end, let us suppose that $T_1$ is the first branch time.  According to the definition of $\mathbb{Q}_{\delta_x}^k$ we have, for $x\in E$ and any bounded measurable $H$,
\[
\mathbb{Q}_{\delta_x}^k[H(\xi_s, 0\leq s\leq t); t< T_1] = 
\mathbf{E}_{x}\left[\frac{\zeta(\xi, t)}{\zeta(\xi, 0)}H(\xi_s, s\leq t) {\rm e}^{-\int_0^t \beta(\xi_s){\sm}_{k}(\xi_s)\d s}\right], \qquad t\geq0,
\]
where, on $\{t<T_1\}$, $\xi = \xi^1=\dots = \xi^k$. Noting similarly that 
\[
\mathbb{P}_{\delta_x}^k[H(\xi_s, 0\leq s\leq t); t< T_1] = 
\mathbf{E}_{x}\left[H(\xi_s, s\leq t) {\rm e}^{-\int_0^t \beta(\xi_s)\d s}\right], \qquad t\geq0,
\]
it follows that 
\[
\mathbb{Q}_{\delta_x}^k[H(\xi_s, 0\leq s\leq t); t< T_1] = 
\mathbb{P}^k_{x}\left[\frac{\zeta(\xi, t)}{\zeta(\xi, 0)}H(\xi_s, s\leq t) {\rm e}^{-\int_0^t \beta(\xi_s)({\sm}_{k}(\xi_s)-1)\d s}\right], \qquad t\geq0,
\]
which agrees with \eqref{Qtilde} on $\{t<T_1\}$.
\medskip

Next, we extend this to include what happens at the first branch event. Again referring to the definition of $\mathbb{Q}^k_{\delta_x}$, for $t>0$, $x\in E$, $n \in \mathbb{N}$ and $H$ and $\xi$ as before, $f\in B(E)$  and $i_1,\dots, i_k\in \mathbb{N}$ such that $|\{i_1,\dots, i_k\}| = n$, $L_{\mathbf{i}}:=\{l\in \mathbb{N} \text{ s.t. } i_j=l \text{ for some } 1\le j\le k\}$ 
%with $k_l:=|{j: i_j=l}|$ for $1\le l \le N$
\begin{align*}
&\mathbb{Q}_{\delta_x}^k[H(\xi_s, 0\leq s\leq t){\rm e}^{-\langle f, \mathcal{Z}\rangle}
\mathbf{1}_{\{ T_1\in \d t\}}\mathbf{1}_{\{i_1, \dots, i_k\in\{1,\dots N\}\}}\mathbf{1}_{\{  \psi_t^1 =i_1, \dots, \psi^k_t  =i_k \}}]\notag\\
&=\mathbb{P}^k_{x}\Bigg[\frac{\zeta(\xi, t)}{\zeta(\xi, 0)}H(\xi_s, s\leq t){\rm e}^{-\langle f, \mathcal{Z}\rangle}{\rm e}^{-\int_0^t \beta(\xi_s)({\sm}_{k}(\xi_s)-1)\d s}
\notag\\
& \hspace{1cm}
 \times\mathbf{1}_{\{i_1, \dots, i_k\in\{1,\dots N\}\}}\mathbf{1}_{\{  \psi_t^1 =i_1, \dots, \psi^k_t  =i_k \}}
\beta(\xi_t) \sm_k(\xi_t) \frac{{\sm}_{k,n}(\xi_t)}{\sm_{k}(\xi_t)} \frac{\langle \varphi, \cZ\rangle_{k, n}}{\mathcal{E}_{\xi_t}(\langle \varphi, \cZ\rangle_{k, n})}
\frac{\prod_{l\in L_{\mathbf{i}}} \varphi(x_l)}{\langle \varphi, \cZ\rangle_{k, n}}
\frac{1}{1/N^k}\Bigg]\d t,
 \notag\\
 &=\mathbb{P}^k_{x}\Bigg[\frac{\zeta(\xi, t)}{\zeta(\xi, 0)}H(\xi_s, s\leq t){\rm e}^{-\langle f, \mathcal{Z}\rangle}{\rm e}^{-\int_0^t \beta(\xi_s)({\sm}_{k}(\xi_s)-1)\d s}
 \notag\\
&\hspace{4cm}  
\times\mathbf{1}_{\{i_1, \dots, i_k\in\{1,\dots N\}\}}\mathbf{1}_{\{  \psi_t^1 =i_1, \dots, \psi^k_t  =i_k \}}
  \beta(\xi_t)\varphi(\xi_t)^{-n}
\prod_{l\in L_{\mathbf{i}}} \varphi(x_l) \,
N^k\Bigg]\d t,
\end{align*}
where the factor $1/N^k$ is removing the selection bias of the $k$-spines under $\mathbb{P}^k$. Noting that, in the above calculation, on $\{T_1 \in \d t\}$, $S_k(t) \backslash \mathcal{N}_t = \{\emptyset\}$, the set $S_k(t) \backslash \{\emptyset\},$  agrees with the offspring of $\emptyset$, $\varphi(X_\emptyset(\tau_\emptyset^-)) = \varphi(\xi_t)$, $M_\emptyset = n$ and $N_\emptyset = k$.  As such we note that on $\{T_1\in \d t\}$, the change of measure \eqref{Qtilde} is valid.
\end{proof}

\subsection{Spines at different times}\label{SS:differenttimes}

We would also like to consider the ``skeleton at different times''. %That is, for any $0 \le s_k \le \ldots \le s_1$, we would like to consider the process generated by $\{\xi_s^i, s \le s_i, i = 1, \dots, k\}$. 
To this end, fix $k \ge 1$, suppose $0 \le s_k \le \dots \le s_1$ and write $\us = (s_1, \dots, s_k)$. {Let $$\cN_{\mathbf{s}}:=\{(v_1,\dots, v_k): v_i\in \cN_{s_i} \, 1\le i \le k \}$$ and for $\mathbf{v}=(v_1,\dots, v_k) \in \cN_{\mathbf{s}}$, let $$S_k(\mathbf{v},\mathbf{s}) := \{w \in \Omega :  w \preceq v_i \text{ for some } 1 \le i \le k\}$$ be the ``skeleton'' formed by the ancestors of $v_i$ up to times $s_i$.} We also write {$$S_k(\mathbf{s}):=S_k((\psi_{s_1}^1,\dots, \psi_{s_k}^k), \mathbf{s})$$} for the skeleton generated by the spines and $$\partial S_k(\mathbf{s})=\{v\in S_k(\mathbf{s}) \text{ such that } \nexists\, w\in S_k(\mathbf{s}) \text{ with } v\prec w\}$$
for the ``leaves'' of this skeleton.  If for each $1\le i \le k$  we set \[
r_i = \sup\{s > 0 : \psi_s^i = \psi_s^k \text{ for some } k \text{ with } s_k \ge s\}.
\]
then for each $i$ with $r_i=s_i$, $v=\psi_{s_i}^i$ is an element of $\partial S_k(\us)$. We define a further set of labels associated to those $i$ for which $r_i>s_i$:
\[
  \hat\partial S_k(\us) = \{\psi_{r_i}^i, \, 1 \le i \le k, r_i > s_i\}.
\]
\begin{figure}
\centering
	\includegraphics[width = .4\textwidth]{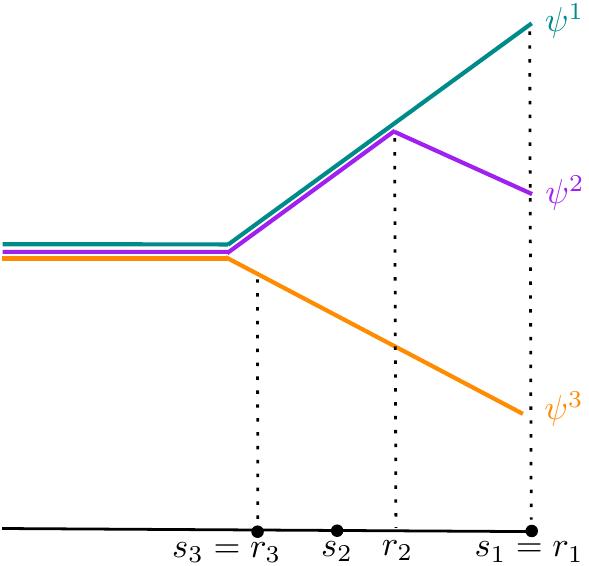}
	\caption{The spines carrying marks $1$, $2$ and $3$ are depicted in cyan, purple and orange respectively. The times $s_i$ and $r_i$ for $i=1,2,3$ are shown (the dotted vertical lines correspond to $r_1, r_2$ and $r_3$).} \label{fig:stubbies}
\end{figure}
\noindent Note that while $\partial S_k(\us)$ is a subset of $S_k(\us)$, $\hat\partial S_k(\us)$ is not.
Finally, for $v=\psi_{r_i}^i \in \hat \partial S_k(\us)\cup \partial S_k(\us)$ we define \begin{equation} \label{eq:rv} r_v:=r_i\end{equation}
(so for $v\in \psi_{s_i}^i \in \partial S_k(\us)$, we equivalently have $r_v=s_i$). See Figure \ref{fig:stubbies}.

\medskip

{ Now fix $t \ge s_1$ and let $\cF_{t, \us}^k$ denote the $\sigma$-algebra generated by:
\begin{itemize}\itemsep0em
	\item $\{\xi^i_s : s \le s_i, 1 \le i \le k\}$ (the motion of the spine with mark $i$ up to time $s_i$ for each $i$);
	\item $\{\psi^i_s : s \le s_i, 1 \le i \le k\}$ (the Ulam-Harris labels associated to the the spine with mark $i$ up to time $s_i$ for each $i$); 
		\item $\{\mathbf{b}_{\psi^i_s} : s \le s_i, 1 \le i \le k\}$ (the collection of marks carried by the spine with mark $i$ up to time $s_i$ for each $i$); 
	\item the subtree rooted at each $w \in \Omega$ that does not carry any marks and is a sibling of \emph{some} $v \in S_k(\mathbf{s})$\footnote{where by subtree we mean the subprocess started at time $\sigma_w$ with root label $w$}, considered up until (global) time $t$.
\end{itemize}  Note that the collection of random variables in the third bullet point above will not always be measurable with  respect to the collection in the second. For example, if $k=2$ and the spine carrying mark $1$ also carries mark $2$ at time $s\in (s_2,s_1)$, then since $\{\psi_s^i : s\le s_i, i=1,2\}$ does not tell us about the labels associated to the spine with mark $2$ after time $s_2$,  $\mathbf{b}_{\psi_s^1}$ is not measurable with respect to it.}

\medskip

We will also use the notation $$\cF_{\us}^k := \cF_{s_1, \us}^k.$$

\begin{figure}\label{fig:tree}
	\hspace{.75cm}
\includegraphics[width = 13cm, height = 8cm]{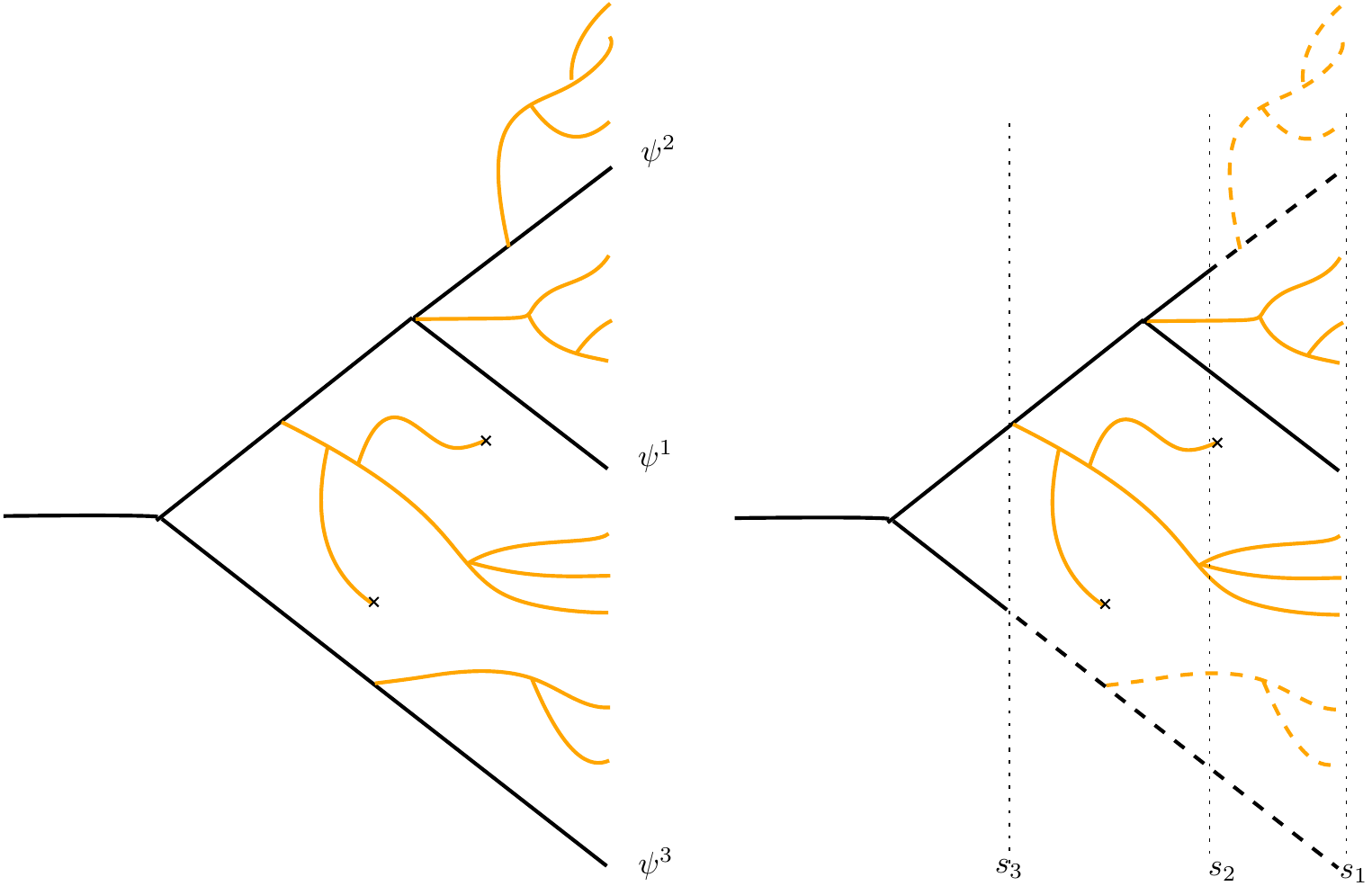}
\caption{The left-hand side figure shows the tree up to time $s_1$ with three spines marked in black. The right-hand side shows the information from the filtration $\cF_{s_1, \bf{s}}^3$, with ${\bf s} = (s_1, s_2, s_3)$, with dashed lines denoting information that is not included.}
\end{figure}

\noindent For each $\mathbf{s}$, we define an $\cF_{\us}^k$-measurable random variable
\begin{align}
{W}^k_{\mathbf{s}}
:= & %\mathbf{1}_{\{\zeta(X_v,s_v)>0\, \forall v\in \partial S_k(\mathbf{s})\}} 
\prod_{v \in S_k(\us)}\frac{\zeta(X_v, \tau_v^- \wedge r_v)}{\zeta(X_v, \sigma_v)} 
{\rm e}^{-\int_{\sigma_v}^{\tau_v \wedge r_v}\beta(X_v(s))({\sm}_{D_v}(X_v(s)) - 1)\d u}\notag \\
&\times \prod_{v \in S_k(\us) \backslash \{\emptyset\}} {\varphi(X_v(\sigma_v))} 
\prod_{v \in S_k(\us) \backslash\partial S_k(\us)}\frac{N_v^{D_v}}{\varphi(X_v(\tau_v^-))^{M_v}},
\label{gen-mg}
\end{align}
where we have used the notation $r_v$ defined in \eqref{eq:rv} for $v\in \partial S_k(\us)$ (and set $r_v=\infty$ otherwise).

%With this notation in hand, we define the $\cF_\us^k$-adapted process $(\widetilde{W}^k_{\mathbf{s}} : 0 \le s_k \le \dots \le s_1)$ as
%\[
%  \widetilde{W}^k_{\mathbf{s}} = h_\us(\psi_1, \dots, \psi_k).
%\]
%Note that we may write
%\[
% \widetilde{W}^k_{\mathbf{s}} = \sum_{\substack{v_i \in \cN_{s_i} \\ i = 1, \dots, k}}h_\us(v_1, \dots, v_k) \mathbf{1}_{\{\psi_{s_i}^i = v_i, \, i = 1, \dots, k\}}.
%\]
\medskip

Recalling Lemma \ref{lem-mg}, restricting instead to $\mathcal F_{\bf s}^k$ yields the following result.

\begin{lemma}
For each $k \ge 1$, $0 \le s_k \le \ldots \le s_1$ and $x \in E$, we have
\begin{equation}
\frac{\d \mathbb{Q}^k_{\delta_x}}{\d \mathbb{P}^k_{\delta_x}} \Bigg|_{\mathcal{F}^k_{\us}} = {W}^k_{\mathbf{s}}.
\label{COM-k}
\end{equation}
\end{lemma}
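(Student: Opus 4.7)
The proof combines Proposition \ref{lem-mg} with a conditional-expectation argument. Since $\mathcal{F}^k_{\mathbf{s}} \subseteq \mathcal{F}^k_{s_1}$, applying Proposition \ref{lem-mg} at time $t=s_1$ together with the tower property gives
\[
\frac{\d\mathbb{Q}^k_{\delta_x}}{\d\mathbb{P}^k_{\delta_x}}\bigg|_{\mathcal{F}^k_{\mathbf{s}}} = \mathbb{E}^{\mathbb{P}^k}_{\delta_x}\!\left[W^k_{s_1}\,\Big|\,\mathcal{F}^k_{\mathbf{s}}\right].
\]
Since $W^k_{\mathbf{s}}$ is $\mathcal{F}^k_{\mathbf{s}}$-measurable (as is evident from \eqref{gen-mg}), it suffices to prove
\[
\mathbb{E}^{\mathbb{P}^k}_{\delta_x}\!\left[W^k_{s_1}/W^k_{\mathbf{s}}\,\Big|\,\mathcal{F}^k_{\mathbf{s}}\right] = 1.
\]

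The plan is to display $W^k_{s_1}/W^k_{\mathbf{s}}$ as a product of conditionally independent factors, each of unit mean. A term-by-term comparison of \eqref{def-Wk} and \eqref{gen-mg} shows that the residual collects the factors attached to the labels in $S_k(s_1)\setminus S_k(\mathbf{s})$, together with, for each $v\in\partial S_k(\mathbf{s})$, the ``overshoot'' integral $\int_{\tau_v\wedge r_v}^{\tau_v\wedge s_1}\beta(X_v(u))(\sm_{D_v}(X_v(u))-1)\,\d u$ and the corresponding $\zeta$-ratio. By the definition of $r_i$, spine $i$ visits only labels outside $S_k(\mathbf{s})$ for times $s>r_i$; by the Ulam--Harris structure, $S_k(s_1)\setminus S_k(\mathbf{s})$ thus partitions into disjoint sub-trees, one rooted at each exit point $v\in\partial S_k(\mathbf{s})\cup\hat\partial S_k(\mathbf{s})$. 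Collecting the residual factors attached to each such sub-tree produces precisely the analogue of \eqref{def-Wk} for a $k'$-spine sub-process initialized at $(X_v(r_v),r_v)$, where $k'\ge 1$ counts the marks leaving $S_k(\mathbf{s})$ through $v$; by Proposition \ref{lem-mg} applied with $k=k'$, this sub-tree factor is a $\mathbb{P}^{k'}$-martingale with unit mean.

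Conditionally on $\mathcal{F}^k_{\mathbf{s}}$, the sub-trees at distinct exit points are rooted at distinct mark-carrying Ulam--Harris labels and therefore lie in regions of the tree that $\mathcal{F}^k_{\mathbf{s}}$ does not observe, since $\mathcal{F}^k_{\mathbf{s}}$ retains only \emph{non-mark-carrying} sibling sub-trees. By the branching property of $\mathbb{P}^k$, the sub-tree factors are then mutually conditionally independent given $\mathcal{F}^k_{\mathbf{s}}$, and each has conditional mean $1$, so their product is $1$, proving \eqref{COM-k}. The main obstacle is the book-keeping required to verify the factorization: one must allocate the $\zeta$-ratios, branching exponentials, $\varphi$ birth factors, and $N^{D}/\varphi^{M}$ splitting factors correctly between $W^k_{\mathbf{s}}$ and each sub-tree factor, distinguishing the cases $v\in\partial S_k(\mathbf{s})$ (where the sub-tree is rooted at the leaf $v$ itself) from $v\in\hat\partial S_k(\mathbf{s})$ (where it is rooted at the offspring of $v$ inheriting the exiting marks, so that the birth factor $\varphi(X_{v'}(\sigma_{v'}))$ has to be attached to the sub-tree).
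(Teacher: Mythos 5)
Your proof is correct and takes essentially the same route as the paper: decompose $W^k_{s_1}$ (the paper uses a generic $t\ge s_1$) as $W^k_{\mathbf{s}}$ times a residual, recognize the residual as a product over $\partial S_k(\mathbf{s})\cup\hat\partial S_k(\mathbf{s})$ of independent copies of the martingale from Proposition \ref{lem-mg} for the subprocesses rooted there, and use the branching Markov property to conclude the conditional expectation is $1$.
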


%\begin{rem}
%Note that the description of the process under $\mathbb{Q}^k$ given in the previous section is still valid. 
%\end{rem}

\begin{proof}

Fix $k \ge 1$, $0 \le s_k \le \dots \le s_1 \le t$ and $x \in E$. Then, due to the structure of ${W}_t^k$, we may write
\begin{align}
  {W}_t^k = {W}_{\bf s}^k\, &\times  
  \prod_{v \in S_k(t)\backslash S_k(\us)}\frac{\zeta(X_v, \tau_v^- \wedge t)}{\zeta(X_v, \sigma_v)} 
 {\rm e}^{-\int_{\sigma_v}^{\tau_v \wedge t}\beta(X_v(s))({\sm}_{D_v}(X_v(s)) - 1)\d u}\notag \\
 &\times \prod_{v \in \partial S_k(\us)}\frac{\zeta(X_v, \tau_v^- \wedge t)}{\zeta(X_v, r_v)}{\rm e}^{-\int_{r_v}^{\tau_v \wedge t}\beta(X_v(s))({\sm}_{D_v}(X_v(s)) - 1)\d u} \notag \\
 &\times \prod_{v \in V_{\us, t}}
 \frac{N_v^{D_v}}{\varphi(X_v(\tau_v^-))^{M_v}}
 \prod_{v \in S_k(t)\backslash S_k(\us)}\varphi(X_v(\sigma_v)),
\end{align}
where $V_{\us, t} := 
%(S_k(\us) \cap \mathcal N_{\color{red}\us}) \cup (S_k(t) \cap S_k(\us)^c \cap \mathcal{N}_t^c)$ {\color{red}[AEK: I think this should say $
%\partial S_k(\us) \cup (S_k(t) \cap S_k(\us)^c \cap \mathcal{N}_t^c)$ (that is, $V_{\mathbf{s},t}$ is  $
(S_k(t)\setminus S_k(\mathbf{s})\cup \cN_t) \cup \partial S_k(\mathbf{s}) $ (in words, the difference of the skeletons at times $t$ and $\mathbf{s}$, minus the boundary at time $t$, but plus the boundary at time $\mathbf{s}$).
%the set $\mathcal{N}_\us$ is a vector of labels so cannot be intersected with individual labels. Can this strange set be characterised in words?]}. 
Hence, 
\begin{align} \label{eq:mgale_project}
  \mathbb{P}_{\delta_x}^k[{W}_t^k | \cF_{\us}^k]
 = {W}_{\bf s}^k \, &\times \mathbb{P}^k_{\delta_x}\bigg[\prod_{ v \in S_k(t)\backslash S_k(\us)}\frac{\zeta(X_v, \tau_v^- \wedge t)}{\zeta(X_v, \sigma_v)} 
 {\rm e}^{-\int_{\sigma_v}^{\tau_v \wedge t}\beta(X_v(s))({\sm}_{D_v}(X_v(s)) - 1)\d u}\notag\\
  &\qquad \quad\times \prod_{v \in \partial S_k(\us)}\frac{\zeta(X_v, \tau_v^- \wedge t)}{\zeta(X_v, r_v)}{\rm e}^{-\int_{r_v}^{\tau_v \wedge t}\beta(X_v(s))({\sm}_{D_v}(X_v(s)) - 1)\d u} \notag \\
 &\qquad \quad \times \prod_{v \in V_{\us, t}}
 \frac{N_v^{D_v}}{\varphi(X_v(\tau_v^-))^{M_v}}
 \prod_{v \in S_k(t)\backslash S_k(\us)}\varphi(X_v(\sigma_v)) \bigg| \mathcal{F}_{\us}^k\bigg].
\end{align}

Now consider the collection of subprocesses initiated at times $r_v$ for each $v \in \partial S_k(\us)\cup  \hat\partial S_k(\us)$. The branching Markov property implies that these are 
%Note that the subtrees initiated at time $s_v$ from each $v \in \partial S_k(\us)$ are 
independent of each other and of $\mathcal{F}_{\us}^k$. 
Moreover, we may rewrite the right-hand side of \eqref{eq:mgale_project} as 
$$ W_{\mathbf{s}}^k \times \mathbb{P}^k_{\delta_x} \bigg[ \prod_{v\in \hat\partial S_k(\us) \cup \partial S_k(\us)} W_{t_v}^{(v)} \, \bigg| \, \cF_{\mathbf{s}}^k \bigg],$$
 where for each $v\in \partial S_k(\us)\cup \hat \partial S_k(\us)$, $W^{(v)}$ is a copy of the martingale $W^{D_v}$ associated to the subprocess rooted at $v$ and $t_v=t-r_v$. In particular, $$\mathbb{P}^k_{\delta_x} \bigg[  \prod_{v\in \hat\partial S_k(\us)\cup\partial S_k(\us)} W_{t_v}^{(v)} \, \bigg| \, \cF_\mathbf{s}^k \bigg] = \prod_{v\in \hat\partial S_k(\us) \cup\partial S_k(\us)} \mathbb{P}^k_{\delta_x} [W_{t_v}^{(v)}] =1,$$
%Using the decomposition in Remark \ref{rem:alt-mg} and similar arguments to those given in the proof of Proposition \ref{lem-mg} show that the conditional expectation on the right-hand side above is equal to one, 
which gives the result.
\end{proof}

%\bigskip
%

\section{Many-to-few lemma}\label{S:m2few}

\subsection{Statement and proof of the many-to-few lemma}\label{SS:m2few}
We are now ready to formulate and prove our main result, which is a many-to-few lemma for general non-local branching Markov processes at a collection of different times. This generalises the result of \cite{Many2few} in two ways: firstly, it holds for non-local branching mechanisms; and secondly, it allows us to deal with sums over the population at different times.

\begin{lemma}[Many-to-few at different times] \label{m2k-ktimes}
{Let $x \in E$, $k \ge 1$ and $0 \le s_k \le \dots \le s_1$ be fixed. 
Suppose that 
\[ 
Y= \sum_{v_i \in \cN_{s_i},\, i = 1, \dots, k} Y(v_1, \dots, v_k)\mathbf{1}_{\{\psi_{s_i}^i=v_i,\, 1\le i \le k\}}
%{\color{red} Y(\psi_{s_1}^1, \dots, \psi^k_{s_k}) }
\]
is non-negative and $\mathcal{F}_{\mathbf{s}}^k$-measurable, where $Y(v_1,\dots, v_k)$ is $\mathcal{F}_{s_1}$-measurable\footnote{By this we mean that we have a collection $(Y(v_1, \dots, v_k), v_1, \dots, v_k \in \Omega)$ of $\cF_{s_1}$-measurable random variables such that $Y(v_1, \dots, v_k) = 0$ unless $v_i \in \cN_{s_i}$ for all $i = 1, \dots, k$.} for every $(v_1,\dots, v_k)\in \mathcal{N}_{\mathbf{s}}$. Then it holds that}
\begin{align*}
%\mathbb{P}^k_{\delta_x}[Y] 
%&=
\mathbb{P}_{\delta_x}\Bigg[
\sum_{\substack{v_i \in \cN_{s_i} \\ i = 1, \dots, k}} Y(v_1, \dots, v_k)
%= \mathbf{1}_{\{\psi_{s_i}^i=v_i\, 1\le i \le k\}}
\Bigg] 
 %\notag\\
  &= \mathbb{Q}_{\delta_x}^{{k}}\bigg[{Y}
 \prod_{v \in S_k(\us)}\frac{\zeta(X_v, \sigma_v)}{\zeta(X_v, \tau_v^- \wedge s_v)} 
{\rm e}^{\int_{\sigma_v}^{\tau_v \wedge s_v}\beta(X_v(s))({\sm}_{D_v}(X_v(s)) - 1)\d u}\notag \\
&\hspace{0.8cm} \times\frac{\prod_{v \in S_k(\us) \backslash  \partial S_k(\us)}\varphi(X_v(\tau_v^-))^{M_v}}
{\prod_{v \in S_k(\us) \backslash \{\emptyset\}}\varphi(X_v(\sigma_v))}
\prod_{v \in S_k(\us)\backslash \partial S_k(\us)}N_v^{{-D_v}}\prod_{i= 1}^k \prod_{\emptyset \preceq v \prec \psi_{s_i}^i}N_v
 \bigg].
 \label{m2few-general}
\end{align*}
\end{lemma}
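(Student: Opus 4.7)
The plan is to proceed in two conceptual steps: first, introduce the marks so that the $k$-fold sum on the left becomes a single-term expectation under $\mathbb{P}^k$ weighted by an explicit selection-bias factor; second, apply the change of measure \eqref{COM-k} to pass from $\mathbb{P}^k$ to $\mathbb{Q}^k$, upon which the reciprocal of $W_{\mathbf{s}}^k$ automatically produces all the remaining factors in the claim.

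For the first step, note that since the marginal law of $X$ under $\mathbb{P}^k$ coincides with its law under $\mathbb{P}$ and each $Y(v_1,\ldots,v_k)$ is $\cF_{s_1}$-measurable, the left-hand side equals the same sum taken under $\mathbb{P}^k$. Now condition on $\cF_{s_1}$: by construction, under $\mathbb{P}^k$ the $k$ marks perform independent uniform walks on the tree---at every branching event, each mark currently held by the parent picks an offspring uniformly, independently of all the other marks. Hence for any tuple with $v_i \in \cN_{s_i}$,
\[
\mathbb{P}^k_{\delta_x}\bigl[\psi^i_{s_i}=v_i\text{ for all }i \bigm| \cF_{s_1}\bigr]=\prod_{i=1}^k\prod_{\emptyset\preceq w\prec v_i}\frac{1}{N_w}.
\]
Multiplying each summand by the reciprocal of this factor (which is $\cF_{s_1}$-measurable), taking conditional expectations, and then outer expectations, yields
\[
\mathbb{P}_{\delta_x}\!\left[\sum_{v_i\in\cN_{s_i}}Y(v_1,\ldots,v_k)\right] =\mathbb{P}^k_{\delta_x}\!\left[Y\prod_{i=1}^k\prod_{\emptyset\preceq v\prec\psi^i_{s_i}}N_v\right].
\]

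For the second step, the integrand on the right is $\cF_{\mathbf{s}}^k$-measurable by hypothesis on $Y$ and construction of the product, so the change of measure \eqref{COM-k} applies and gives
\[
\mathbb{P}_{\delta_x}\!\left[\sum_{v_i\in\cN_{s_i}}Y(v_1,\ldots,v_k)\right] = \mathbb{Q}^k_{\delta_x}\!\left[\frac{1}{W_{\mathbf{s}}^k}\,Y\prod_{i=1}^k\prod_{\emptyset\preceq v\prec\psi^i_{s_i}}N_v\right].
\]
Substituting the explicit product formula \eqref{gen-mg} for $W_{\mathbf{s}}^k$, its reciprocal simultaneously inverts the $\zeta$- and $\varphi$-ratios, flips the sign in the exponent, and replaces $N_v^{D_v}$ by $N_v^{-D_v}$ on $v\in S_k(\mathbf{s})\setminus\partial S_k(\mathbf{s})$. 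Interpreting $s_v$ as the time at which the spine label $v$ ceases to be tracked (equivalently, $\tau_v^-$ in the interior of $S_k(\mathbf{s})$ and $r_v$ on the boundary), this matches the stated expression exactly.

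The main delicate point is in step one, in justifying the mark-selection probability: even when several marks travel together along a common segment of the skeleton, at the next branching event they choose independently, and this independence is what causes the joint probability to factorise into one factor of $1/N_w$ per strict ancestor and per mark. Once this probability is correctly identified, the remainder of the proof is essentially algebraic rearrangement combined with a direct invocation of \eqref{COM-k}.
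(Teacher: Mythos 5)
Your proposal is correct and takes essentially the same approach as the paper: the two key ingredients are the conditional selection probability $\mathbb{P}^k_{\delta_x}(\psi_{s_i}^i = u_i,\ i=1,\dots,k \mid \mathcal{F}_{s_1}) = \prod_{i=1}^k\prod_{\emptyset\preceq v\prec u_i}N_v^{-1}$ and the change of measure \eqref{COM-k}, exactly as in the paper, the only difference being that you run the computation from the $\mathbb{P}_{\delta_x}$ side to the $\mathbb{Q}^k_{\delta_x}$ side while the paper starts from the $\mathbb{Q}^k_{\delta_x}$ expression and unwinds it back to $\mathbb{P}_{\delta_x}$.
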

{ \begin{rem}\rm \label{notequal}
Recall that 
%From its definition, $S_k(\mathbf{s}) = \{w \in \Omega : w \preceq \psi^i_{s_i} \text{ for some } 1 \le i \le k\}$ and $ \cN_{\mathbf{s}}:=\{(v_1,\dots, v_k): v_i\in \cN_{s_i} \, 1\le i \le k \}$. Hence 
\[
S_k(\us) \backslash  \partial S_k(\us)  = \{w \in \Omega : \emptyset \preceq w \prec \psi^i_{s_i} \text{ for some } 1 \le i \le k\}.
\]
%In particular 
%\[
%\prod_{v\in S_k(\us) \backslash  \cN_{\us} } \cdot = \prod_{\substack{\emptyset \preceq v \prec \psi_{s_i}^i \\ i = 1,\dots, k}}\cdot
%\]
Also recall that the mark carried by an individual (spine) $v$ is ${\bf b}_v$ with  cardinality $D_v=|{\bf b}_v|$ (that is the number of marks carried by $v$) and $M_v$ is the number of offspring of $v$ that inherit a mark from $v$. As a consequence, 
\begin{equation}
|\{i\in\{1,\dots, k\}: \emptyset \preceq v \prec \psi_{s_i}^i\}| \leq  D_v.
\label{problem}
\end{equation}
The inequality in \eqref{problem}  can be strict when, for example,   $\emptyset\preceq v  = \psi_{s_j}^j \prec \psi_{s_i}^i$ for some pair $i\neq j$.
%Similarly, it seems that the notion of a separated skeleton equally forbids $v = \psi_{s_j}^j \prec \psi_{s_i}^i$.
\end{rem}}

\begin{proof}[Proof of Lemma \ref{m2k-ktimes}]
Let us start by rewriting the right-hand side of the expression in the lemma. Noting that under $\mathbb{Q}_{\delta_x}$, $W_{\us}^k$ is positive, we have
\begin{align} 
\mathbb{Q}_{\delta_x}^k\bigg[ \frac{Y}{{W}^k_{\mathbf{s}}} \prod_{i= 1}^k \prod_{\emptyset \preceq v \prec \psi_{s_i}^i}N_v \bigg]
&= \mathbb{P}_{\delta_x}^k \bigg[ Y\prod_{i= 1}^k \prod_{\emptyset \preceq v \prec \psi_{s_i}^i}N_v\bigg]\notag\\
&= \mathbb{P}^k_{\delta_x} \bigg[ \sum_{u_i\in \cN_{s_i}, \,  1\le i \le k}Y(u_1,\dots, u_k)\mathbf{1}_{\{\psi_{s_i}^i=u_i \,  1\le i \le k \}} \prod_{i= 1}^k \prod_{ \emptyset \preceq v \prec u_i}N_v\bigg] \label{lookslike}
\end{align}
where the first equality holds thanks to the change of measure~\eqref{COM-k}. %Note that for any $v_1,\dots, v_k\in \cN_{\mathbf{s}_1}$ we can write

\medskip

%Next, for $u_1,\dots, u_k\in \cN_{s_1}$, write $\pi_{s_i}( u_i)$ for the unique ancestor of $u_i$ at time $s_i$. That is to say, $\psi^i_{s_i}=\pi_{s_i}(\psi^i_{s_1})$. 
%Then by expressing the partitioning summation in the definition of $Y$ as a partition over the choices of the spines via their the identification  in  $\cN_{s_1}$, the right hand side above is equal to  
%\begin{align}
%%\mathbb{P}_{\delta_x}^k& \bigg[ Y(\pi_{s_1}(\psi^i_{s_1}),\dots, \pi_{s_k}(\psi^i_{s_1})) \prod_{i= 1}^k \prod_{ \emptyset \preceq v \prec \pi_{s_i}\circ\psi^i_{s_1}}N_v\bigg]\notag\\
%\mathbb{P}^k_{\delta_x} \bigg[ \sum_{u_i\in \cN_{s_1}, \,  1\le i \le k}Y(\pi_{s_1}(u_1),\dots, \pi_{s_k}(u_k))\mathbf{1}_{\{\psi_{s_1}^i=u_i \,  1\le i \le k \}} \prod_{i= 1}^k \prod_{ \emptyset \preceq v \prec \pi_{s_i}(u_i)}N_v\bigg]
%\label{lookslike}
%\end{align}
Conditioning on {$\mathcal{F}_{s_1}$, we have}
%Conditioning on $\mathcal{F}_{s_1}$, and using that 
\[
  \mathbb{P}^k_{\delta_x}\left(\psi_{s_i}^i = u_i, \, i = 1, \dots, k \bigg| \mathcal{F}_{s_1}\right)
 = \prod_{i=1}^k \prod_{\emptyset \preceq v \prec u_i} N_v^{-1},
\]
and {thus, by the properties of conditional expectation,} we can rewrite \eqref{lookslike} as
\begin{align*} 
&\mathbb{P}^k_{\delta_x} \bigg[ \sum_{u_i\in \cN_{s_i}, \,  1\le i \le k}Y(u_1,\dots, u_k)
%\mathbf{1}_{\{\psi_{s_1}^i=u_i \,  1\le i \le k \}} 
  \prod_{i= 1}^k \prod_{\emptyset \preceq v \prec u_i}N_v \prod_{\emptyset \preceq v \prec u_i} N_v^{-1}\bigg] 
%& = \mathbb{P}^k_{\delta_x} \bigg[ \sum_{u_i\in \cN_{s_i}, \,  1\le i \le k}Y(u_1,\dots, u_k)
%%\mathbf{1}_{\{\psi_{s_1}^i=u_i \,  1\le i \le k \}} 
%\prod_{i=1}^k \prod_{v_i(u_i) \preceq v \prec u_i} N_v^{-1}  \bigg] \\
 = \mathbb{P}_{\delta_x} \bigg[ 
\sum_{v_i\in \cN_{s_i}, \,  1\le i \le k}
Y(v_1,\dots, v_k) 
%\mathbf{1}_{\{\psi_{s_i}^i=v_i\, 1\le i \le k\}}
 \bigg],
% \notag\\
%&=\mathbb{P}^k_{\delta_x}[Y].
\end{align*}
as required.
\end{proof}

Referring to Remark \ref{notequal}, if we take $(s_1,\dots, s_k) = (t, \dots, t)$ then 
$v = \psi_{t}^j \prec \psi_{t}^i$ cannot occur and the inequality in \eqref{problem} is an equality. 
In that case, 
\begin{align*}
\prod_{i= 1}^k \prod_{\emptyset \preceq v \prec \psi_{s_i}^i}N_v &= \prod_{v\in S_k(\us) \backslash  \partial S_k(\us) }N_v^{D_v},
\end{align*}
%Note that if $\mathbf{s}=(t,t,\dots, t)$ we have that 
%\[
%\prod_{v \in S_k(\us)\backslash \partial S_k(\us)}N_v^{{D_v}}=\prod_{i= 1}^k \prod_{\emptyset \le v < \psi_{s_i}^i}N_v,
%\]
%
which yields the following corollary (as a special case).

\begin{cor}[Many-to-few]\label{lem-many2few}
Let $x \in E$, $k \ge 1$ and $t \ge 0$ be fixed. 
Suppose that 
\[ 
Y= \sum_{v_i \in \cN_{t},\, i = 1, \dots, k} Y(v_1, \dots, v_k)\mathbf{1}_{\{\psi_{t}^i=v_i,\, 1\le i \le k\}}
%{\color{red} Y(\psi_{s_1}^1, \dots, \psi^k_{s_k}) }
\]
is $\mathcal{F}_{t}^k$-measurable with $Y(v_1,\dots, v_k)$ $\mathcal{F}_{t}$-measurable for $v_i\in \mathcal{N}_{t}$ for $i = 1, \dots k$. 
%For any $k \ge 1$, $t \ge 0$, $x \in E$ and $\cF_t^k$-measurable $Y$, so that $$Y=\sum_{\substack{v_i \in \cN_{t} \\ i = 1, \dots, k}} Y(v_1,\dots, v_k) \mathbf{1}_{\{\psi_{s_i}^i=v_i\, 1\le i \le k\}} $$
%with $Y(v_1,\dots, v_k) \; \mathcal{F}_t$ measurable for each $v_1,\dots, v_k$, 
Then
\begin{align*}
\mathbb{P}_{\delta_x}\bigg[
\sum_{\substack{v_i \in \cN_{t} \\ i = 1, \dots, k}} Y(v_1, \dots, v_k)
% \mathbf{1}_{\{\psi_{s_i}^i=v_i\, 1\le i \le k\}}
\bigg]   = \mathbb{Q}_{\delta_x}^k&\bigg[Y \prod_{v \in S_k(t)}
\frac{\zeta(X_v, \sigma_v)} {\zeta(X_v, \tau_v^- \wedge t)}{\rm e}^{\int_{\sigma_v}^{\tau_v \wedge t}\beta(X_v(s))({\sm}_{D_v}(X_v(s)) - 1)\d s}\\
& \hspace{2cm}\times \frac{\prod_{v \in S_k(t) \backslash \mathcal{N}_t}{\varphi(X_v(\tau_v^-))^{M_v}}}{\prod_{v \in S_k(t) \backslash \{\emptyset\}} \varphi(X_v(\sigma_v))}\bigg]. 
\end{align*}
\end{cor}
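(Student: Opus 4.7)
The proof will be an immediate specialization of Lemma \ref{m2k-ktimes} with the time vector $\us = (t, t, \ldots, t)$. The first thing I would observe is that when all $s_i = t$, no two distinct spines can stand in a strict ancestor relation to one another at time $t$; that is, $\psi_t^j \prec \psi_t^i$ is impossible since both labels refer to particles alive at the same global time. This immediately gives $r_i = s_i = t$ for each $i$, so that $\hat{\partial} S_k(\us) = \emptyset$, $\partial S_k(\us) = S_k(t) \cap \cN_t$, and correspondingly $S_k(\us) \setminus \partial S_k(\us) = S_k(t) \setminus \cN_t$. Moreover, for $v \in \partial S_k(\us)$ the cap $\tau_v^- \wedge s_v$ reduces to $\tau_v^- \wedge t$, while for internal $v$ it is simply $\tau_v^-$, and both cases are uniformly captured by $\tau_v^- \wedge t$ in the statement of the corollary.

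The crucial step is the simplification of the $N_v$ factors. In the general formula these appear as
\[
\prod_{v \in S_k(\us) \setminus \partial S_k(\us)} N_v^{-D_v} \prod_{i = 1}^{k} \prod_{\emptyset \preceq v \prec \psi_{s_i}^i} N_v.
\]
For an internal vertex $v \in S_k(t) \setminus \cN_t$, the index set $\{i : \emptyset \preceq v \prec \psi_t^i\}$ consists of all spines whose path passes through $v$, and this count is exactly $D_v$, the number of marks carried by $v$. For a boundary vertex $v \in \partial S_k(\us)$, on the other hand, no $\psi_t^i$ can be a strict descendant of $v$ (again because they all sit at time $t$), so $v$ contributes nothing to the right-hand product. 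Together these identifications yield
\[
\prod_{i = 1}^{k} \prod_{\emptyset \preceq v \prec \psi_t^i} N_v \;=\; \prod_{v \in S_k(t) \setminus \cN_t} N_v^{D_v},
\]
which cancels the first product exactly.

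After this cancellation, the remaining motion-biasing ratios $\zeta(X_v,\sigma_v)/\zeta(X_v,\tau_v^-\wedge t)$, the exponential compensators $\exp\{\int_{\sigma_v}^{\tau_v \wedge t}\beta(X_v(s))(\sm_{D_v}(X_v(s))-1)\d s\}$, and the $\varphi$-ratio $\prod_{v \in S_k(t)\setminus\cN_t}\varphi(X_v(\tau_v^-))^{M_v}/\prod_{v \in S_k(t)\setminus\{\emptyset\}}\varphi(X_v(\sigma_v))$ transfer directly from the general formula into the form stated in the corollary. The main (though mild) obstacle is precisely the bookkeeping flagged in Remark \ref{notequal}: one has to confirm that the strict-ancestor count matches $D_v$ on internal vertices but collapses to zero on boundary vertices, so that the inequality \eqref{problem} becomes an equality on the only part of $S_k(\us)$ where it matters. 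There are no additional analytic ingredients needed beyond Lemma \ref{m2k-ktimes} itself.
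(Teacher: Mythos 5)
Your proof is correct and matches the paper's own derivation essentially verbatim: the paper likewise specialises Lemma \ref{m2k-ktimes} to $\us=(t,\dots,t)$, observes (via Remark \ref{notequal}) that $\psi_t^j\prec\psi_t^i$ is impossible so the inequality \eqref{problem} becomes an equality on $S_k(t)\setminus\cN_t$, and cancels $\prod_{i}\prod_{\emptyset\preceq v\prec\psi_t^i}N_v=\prod_{v\in S_k(t)\setminus\cN_t}N_v^{D_v}$ against $\prod N_v^{-D_v}$. The additional bookkeeping you supply (that $\hat\partial S_k(\us)=\emptyset$, $\partial S_k(\us)=S_k(t)\cap\cN_t$, and the time-caps collapse to $\tau_v^-\wedge t$) is exactly what the paper leaves implicit.
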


\subsection{{Natural} choice of $\zeta$ and separated skeletons}\label{sec:sep-skel}
We will now consider a specific example of the many-to-few formula given in Lemma \ref{m2k-ktimes} by choosing a particular form for the martingale $\zeta$. 
Using the fact that $\varphi$ is the right eigenfunction for the MBP, $(X, \mathbb{P})$, with corresponding eigenvalue $\lambda$, it isn't too difficult to show that under $\mathbf{P}_{x}$
\begin{equation}
\zeta(\xi, t) = \frac{\varphi(\xi_t)}{\varphi(x)}\exp\left(\int_0^t {\beta}(\xi_s) (\sm_1(\xi_s)-1)  \d s\right), \quad t \ge 0,
\label{specific-zeta}
\end{equation}
is a martingale.

\medskip

In this case, we have that 
\begin{align}
W^k_t&=\frac{1}{\varphi(x)} \prod_{v \in S_k(t)}{\rm e}^{-\int_{\sigma_v}^{\tau_v \wedge t}\beta(X_v(s))({\sm}_{D_v}(X_v(s)) - {\sm}_1(X_v(s)))\d s}\notag \\
&\qquad \qquad\times \prod_{v\in S_k(t)\cap \cN_t} \varphi(X_v(t)) \prod_{v \in S_k(t) \backslash \mathcal{N}_t}\frac{N_v^{D_v}}{\varphi(X_v(\tau_v^-))^{M_v-1}}.
\label{eq:special-W}
\end{align}

We now further assume that each of the nodes $v_i \in \mathcal{N}_{s_i}$, $i = 1, \dots, k$, that make up the skeleton $S_k(\uv, \us)$, are distinct. We refer to a skeleton with this property as a {\it separated skeleton}. This implies that at time $s_i$, node $v_i$ only carries one mark for each $i = 1, \dots, k$. {Recalling again Remark \ref{notequal},  we thus have the advantage of writing}
\[
  \prod_{v \in S_k(\us)\backslash \mathcal{N}_\us} N_v^{-D_v} 
  = \prod_{i = 1}^k \prod_{\emptyset \le v < \psi_{s_i}^i} N_v^{-1}.
\]

\medskip

Applying the many-to-few formula with this martingale, for the special choice of $\zeta$, and in the case of separated skeletons, we get 
\begin{align}\label{eq:natural_martingale}
&\mathbb{P}_{\delta_x}^k \bigg[ 
\sum_{v_i\in \cN_t \text{ distinct }} Y(v_1,\dots, v_k) 
%Y \mathbf{1}_{\{\{\psi_t^i\}_{1\le i \le k} \text{ distinct}\}}
\bigg] \notag \\
&= \varphi(x)\mathbb{Q}^k_{\delta x} \bigg[ Y \mathbf{1}_{\{\{\psi_t^i\}_{1\le i \le k} \text{ distinct}\}} \prod_{j=1}^k \varphi(\xi^j_t)^{-1} \notag \\
& \hspace{3cm}\times \prod_{v \in S_k(t)}{\rm e}^{\int_{\sigma_v}^{\tau_v \wedge t}\beta(X_v(s))({\sm}_{D_v}(X_v(s)) - {\sm}_1(X_v(s)))\d s}
\prod_{v \in S_k(t) \backslash \mathcal{N}_t}{\varphi(X_v(\tau_v^-))^{M_v-1}}\bigg].
\end{align}

\section{Application to genealogies in the critical case}\label{S:application}

As an application, we determine the asymptotic law of the death time of the most recent common ancestor, henceforth referred to as \emph{split time}, of two particles sampled uniformly from a critical population at two different times. The limit is taken as $t\to \infty$, when we have conditioned on survival of the process up to time $t$.

\medskip

We assume in this section that the measures $\mathbb{Q}^k$ and $\mathbb{P}^k$ are as defined in section \ref{sec:sep-skel}. We remind the reader of the notation $\xi^1,\dots, \xi^k$ for the motion of the $k$ spines under $\mathbb{Q}^k$ and $\mathbb{P}^k$, and $\psi^1,\dots, \psi^k$ for the Ulam-Harris labels that they carry.

\medskip

We assume throughout the section that our branching process satisfies the following additional criticality requirement.

\begin{ass}\label{A:Yaglom}
The following criticality assumptions hold.
 \begin{enumerate}
 \item $\lambda = 0$, where $\lambda$ was introduced in Assumption \ref{ass1}
 \item Define $\Delta_t := \sup_{x \in E, f \in B_1^+(E)}|\varphi(x)^{-1}\sT_t[f](x) -\langle f, \tilde\varphi\rangle |$.
 Then 
 \[
   \sup_{t \ge 0}\Delta_t < \infty \text{ and } \lim_{t \to \infty}\Delta_t = 0.
 \]
 \item The number of offspring produced at a branching event is bounded above by $n_{max} < \infty$.
 \item There exists a constant $C > 0$ such that for all $g \in B^+(E)$, 
 \[
 {\Sigma  :=} \langle \beta \mathbb{V}[g] ,  \tilde\varphi\rangle \ge C\langle g,  \tilde\varphi\rangle^2,
 \]
 where $\mathbb{V}[g](x) := \mathcal{E}_x[\langle g, \mathcal{Z}\rangle_{2, 2}]$ and where the notation $\langle g, \mathcal{Z}\rangle_{k, n}$ was defined in \eqref{eq:new_offspring}.
 \item For all $x \in E$, $\mathbb{P}_{\delta_x}(\exists t > 0 \text{ such that } N_t = 0) = 1$.
 \end{enumerate}
\end{ass}
\begin{rem}\rm{
 We note that the assumptions above are inherited from \cite{YaglomNTE, moments}, where asymptotic results concerning criticality and moment growth were considered. Whereas the Assumption \ref{A:Yaglom}.1 is clearly a standard criticality assumption, the remaining assumptions \ref{A:Yaglom}.2 - \ref{A:Yaglom}.5 can be interpreted as follows. Roughly speaking, Assumption \ref{A:Yaglom}.2 describes the uniform stability of the mean semigroup. In particular, by taking $f\equiv1$, Assumption \ref{A:Yaglom}.2 ensures that the first moment of the process settles down to a 
stationary value. Assumption \ref{A:Yaglom}.3 rather obviously requires the random number of offspring to be deterministically bounded.   Assumption \ref{A:Yaglom}.4 can be thought of as an irreducibility condition written in terms of the two-point correlation (or variance) functional  $\mathbb{V}[g]$ and ensures a minimal level of spatial mixing occurring for second order effects associated to the semigroup assumption in Assumption \ref{A:Yaglom}.2. Finally Assumption \ref{A:Yaglom}.5 ensures that, even though at criticality in Assumption \ref{A:Yaglom}.1, we can guarantee there is extinction almost surely. Assumption \ref{A:Yaglom}.5 is automatically satisfied e.g. for a branching Brownian motion in a compact domain with killing on the boundary, or the category of neutron branching process considered in \cite{SNTE-I, SNTE-II, SNTE-III}.}
\end{rem}

Let us now present our main application. Here and in the rest of this section $\Rightarrow$ means convergence in distribution.

\begin{prop}\label{P:split2}
Let $0 < a <1$ and let $x\in E$ be fixed. Let $T_t$ have the $\mathbb{P}_{\delta_x}(\cdot | N_t > 0)$ law of the split time of two particles: one chosen uniformly from those alive at time $t$ and one chosen uniformly from those alive at time $at$. Then $$\frac{T_t}{t}\Rightarrow T \quad \text{ as } t\to \infty$$  where $T$ has density 
	$$ f_a(u):=\frac{2a}{1-a} \frac{2(a-u)\log(1-\tfrac{u}{a})-(2-u-\tfrac{u}{a})\log(1-u)}{u^3}$$
	with respect to Lebesgue measure $\d u$ on $[0,a]$. 
\end{prop}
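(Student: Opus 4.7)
The plan is to use the many-to-few formula (Lemma~\ref{m2k-ktimes}) combined with Yaglom-type asymptotics for the critical branching Markov process and the joint population-size convergence stated in Proposition~\ref{prop:joint-conv}. First, I would invoke the tree identity
\begin{equation*}
\sum_{v_1 \in \mathcal{N}_t,\, v_2 \in \mathcal{N}_{at}} \mathbf{1}_{\{T(v_1,v_2) > ut\}} \;=\; \sum_{w \in \mathcal{N}_{ut}} N_t^w N_{at}^w,
\end{equation*}
where $N_s^w := |\{v \in \mathcal{N}_s : w \preceq v\}|$. This holds because $T(v_1,v_2)>ut$ is equivalent to the existence of a (necessarily unique) common ancestor of $v_1$ and $v_2$ alive at time $ut$. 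The conditional probability we want therefore becomes
\begin{equation*}
\mathbb{P}_{\delta_x}(T_t > ut \mid N_t > 0) = \mathbb{E}_{\delta_x}\!\left[\frac{\sum_{w \in \mathcal{N}_{ut}} N_t^w N_{at}^w}{N_t N_{at}} \,\Big|\, N_t > 0\right].
\end{equation*}

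Next, I would apply the branching Markov property at time $ut$: conditionally on $\mathcal{F}_{ut}$, the subtrees rooted at the distinct $w \in \mathcal{N}_{ut}$ are independent copies of $(X, \mathbb{P})$ started from $X_w(ut)$, and $N^w_t, N^w_{at}$ are the corresponding subtree populations after additional times $(1-u)t$ and $(a-u)t$ respectively. Each such subtree survives to time $t$ with probability of order $1/((1-u)t)$ by the Yaglom asymptotics of \cite{YaglomNTE}, so only $O(1)$ subtrees contribute nontrivially to the numerator. Proposition~\ref{prop:joint-conv}, applied to each surviving subtree, identifies the limiting joint law of the rescaled sizes $(N^w_t, N^w_{at})/((1-u)t)$ in terms of a two-time Feller-diffusion limit. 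Contributions to $N_{at}$ from subtrees that survive to $at$ but die before $t$ must be handled using the Yaglom limit at the intermediate time scale $(a-u)t$.

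To justify replacing the random ratio by its asymptotic conditional expectation, I would use a second-moment bound obtained by applying Lemma~\ref{m2k-ktimes} again, now with $k=4$ and $\mathbf{s}=(t,t,at,at)$, which controls the variance of the numerator in terms of four-spine correlations. Combined with a similar two-spine calculation giving the asymptotics of the denominator, this yields concentration of the ratio about its expectation on the survival event, so that the limit can be passed through the conditional expectation. The final step is to integrate the resulting joint Yaglom/Feller densities over the scaled location $u'\in[0,u]$ of the MRCA, together with the split-rate contribution coming from the spine motion (with rate $\beta(\cdot)\sm_{2,2}(\cdot)$ averaged against $\varphi\tilde\varphi$), to recover the explicit density $f_a$.

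The main obstacle is the concentration step: the numerator and denominator are sums of highly correlated quantities living on two different time scales, and $N_{at}$ receives contributions both from subtrees that survive to $t$ (matched in the numerator) and from subtrees that survive to $at$ but die before $t$ (unmatched). Separating these contributions and controlling them uniformly in $t$, via the $k=4$ many-to-few calculation and the multi-time Yaglom asymptotics, is the key technical point; once this is achieved the density $f_a$ emerges from a routine, if somewhat involved, explicit integration.
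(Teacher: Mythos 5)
The tree identity $\sum_{v_1\in\mathcal N_t,v_2\in\mathcal N_{at}}\mathbf 1_{\{T(v_1,v_2)>ut\}}=\sum_{w\in\mathcal N_{ut}}N_t^wN_{at}^w$ is correct and is a natural alternative starting point, and the decomposition of the subtrees rooted at time $ut$ via the branching Markov property is the right heuristic. However, there is a genuine gap in the proposed \emph{concentration} step. Under $\mathbb P_{\delta_x}(\cdot\,|\,N_t>0)$ the ratio $\sum_w N_t^wN_{at}^w/(N_tN_{at})$ does \emph{not} concentrate around a deterministic value: after rescaling, $N_t$, $N_{at}$ and the surviving subtree contributions all converge jointly in distribution to non-degenerate random variables (exponential/Gamma limits in the Yaglom regime), and the ratio itself converges to a genuinely random limit, not a constant. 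A $k=4$ many-to-few second-moment bound would confirm that the numerator is of order $t^2$, but it cannot give concentration because the number of subtrees surviving to the final time is of Poisson order $O(1)$; the associated compound-sum structure has fluctuations of the same order as its mean. Consequently $\mathbb E[\mathrm{num}]/\mathbb E[\mathrm{denom}]$ is \emph{not} the correct limit of $\mathbb E[\mathrm{num}/\mathrm{denom}]$. Indeed the paper's final formula involves expectations of quantities such as $(1-\e^{-cN})/N^2$ for a Gamma-distributed $N$, which do not arise from a ratio of first moments.

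The paper avoids this issue by keeping the random denominator \emph{inside} the functional $Y$ in the many-to-two lemma: the quantity $1/(N_{at}\hat N_t^w)$ is $\mathcal F^k_{\mathbf s}$-measurable, so Lemma \ref{m2k-ktimes} converts the full conditional expectation directly into a $\mathbb Q^2$-expectation of a bounded functional of the two-spine system. After a further change of measure to $\hat{\mathbb Q}^2$ (making the spine split time exponential) and conditioning on $\tau=ut$, one obtains a clean product structure (single spine to time $ut$, then two independent spine subtrees), and the limit law of the five-tuple $(\xi^1_{ut},\xi^1_t,\xi^2_{at},N_{at}/t,\hat N_t/t)$ is identified explicitly. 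Since the relevant functional is bounded (after a truncation $A^\delta_{v,w}$, which your proposal does not address but which is needed to control $\varphi^{-1}$ and $t/N$), convergence in distribution suffices to pass to the limit inside the expectation; no concentration is required or available. If you wish to salvage the subtree-at-time-$ut$ route, the concentration argument should be replaced by a joint distributional limit for $(N_t^w,N_{at}^w)_w$ under $\mathbb P(\cdot|N_t>0)$ together with bounded convergence of the ratio — but that essentially reproduces the joint convergence that the paper establishes under $\hat{\mathbb Q}^2_{\delta_x,ut}$, with the extra complication that on the $\mathbb P$-side you must handle $O(t)$ subtrees rather than the $O(1)$ that branch off the spines.
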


\begin{rem}\label{R:density1}\rm	Note that 
	\begin{align*} 
	\lim_{a \nearrow 1}\frac{2a}{1-a} &\frac{2(a-u)\log(1-\tfrac{u}{a})-(2-u-\tfrac{u}{a})\log(1-u)}{u^3} 
	= 2 (-2u+(u-2)\log(1-u))
	\end{align*}
	This agrees with the density in the case $a=1$ (for critical GW processes) calculated in \cite{GWgenealogy}.
\end{rem}

\begin{rem} It is not obvious a priori that $f_a$ is the density function of a random variable. However, this is indeed the case, since we can calculate that
	\begin{align*} 
		 \frac{2a}{1-a}& \int_0^a \frac{2(a-u)\log(1-\tfrac{u}{a})-(2-u-\tfrac{u}{a})\log(1-u)}{u^3} \, du \\
		= & \frac{2}{a(1-a)} \int_0^1 \frac{2a(1-u)\log(1-u)-(2-ua-u)\log(1-au)}{u^3} \, du \\
		= & \frac{2}{a(1-a)} \left[\frac{(1-u)}{u^2} \left(a(u-1)\log(1-u)+(1-au)\log(1-au)\right)\right]_0^1 \\
		= & -\frac{2}{a(1-a)}\lim_{u\to 0}\frac{a(u-1)(-u-\tfrac{u^2}{2}+\mathrm{o}(u^2)) + (1-au)(-au-\tfrac{a^2u^2}{2} + \mathrm{o}(u^2)) }{u^2} \notag\\
		= & 1.
	\end{align*}
\end{rem}

 We also have the following result concerning the joint convergence of the (normalised) population size at two different times under $\mathbb{Q}^1$. Recall from Assumption \ref{A:Yaglom}.3 that $\Sigma: = \langle \beta\mathbb{V}[\varphi], \tilde\varphi\rangle.$

\begin{prop}\label{prop:joint-conv}
Under $\mathbb{Q}_{\delta_x}^1$, 
$$\bigg(\frac{N_{at}}{t}, \frac{N_t}{t}\bigg) \Rightarrow (Z, \hat{Z}), \qquad \text{as } t \to \infty,$$ 
where $Z$ is equal in law to a $\mathtt{Gamma}(2, (a\Sigma\langle 1, \tilde\varphi\rangle/2)^{-1})$ random variable and, conditionally on $Z$, the law of $\hat{Z}$ is that of a $\mathtt{Gamma}(2+K, (\Sigma(1-a)\langle 1, \tilde\varphi\rangle /2)^{-1})$ random variable where $K \sim \mathtt{Poisson}(((1-a)\Sigma \langle 1, \tilde\varphi\rangle/2)^{-1}Z)$.

\medskip

Equivalently, under $\mathbb{P}_{\delta_x}(\cdot | N_t > 0)$, we have the joint convergence of $(N_{at}/t, N_t/t)$ to $(Y,\hat{Y})$, where the joint law of $(Y,\hat{Y})$ is that of $(Z,\hat{Z})$ \emph{weighted by} $1/\hat{Z}$.
\end{prop}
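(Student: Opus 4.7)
My plan combines the spine decomposition under $\mathbb{Q}_{\delta_x}^1$ (which carries a single spine) with the Yaglom-type criticality asymptotics of \cite{YaglomNTE} to identify the joint limit of $(N_{at}/t, N_t/t)$, then transfers the result to $\mathbb{P}_{\delta_x}(\cdot\mid N_t>0)$ via the many-to-few identity.

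First I recall the single-time Yaglom asymptotics which hold under Assumption~\ref{A:Yaglom}: uniformly in $y\in E$, $t\,\mathbb{P}_{\delta_y}(N_t>0)\to 2\varphi(y)/\Sigma$ and, conditionally on $\{N_t>0\}$, $N_t/t\Rightarrow \mathrm{Exp}(2/(\Sigma\langle 1,\tilde\varphi\rangle))$. Applying Corollary~\ref{lem-many2few} with $k=1$ and $Y(v)=g(N_t/t)$ yields
\[
\mathbb{E}_{\delta_x}[N_t\,g(N_t/t)]=\varphi(x)\,\mathbb{E}^{\mathbb{Q}^1}_{\delta_x}\!\big[g(N_t/t)\,\varphi(\xi_t^1)^{-1}\big],
\]
which identifies the $\mathbb{Q}^1$-law of $N_t/t$, up to the vanishing factor $\varphi(\xi_t^1)^{-1}/t$, as the size-biased Yaglom law. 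Since size-biasing an exponential with rate $\rho$ yields a $\mathrm{Gamma}(2,\rho)$, this gives $N_t/t\Rightarrow \mathrm{Gamma}(2,2/(\Sigma\langle 1,\tilde\varphi\rangle))$ under $\mathbb{Q}_{\delta_x}^1$, and a rescaling of the horizon delivers the claimed marginal $N_{at}/t\Rightarrow Z$.

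For the joint limit I condition on $\mathcal{F}_{at}^1$, which records the whole configuration at time $at$ and the spine position $\xi_{at}^1$. By the branching Markov property under $\mathbb{Q}^1$, from time $at$ the process decomposes as: (a) the spine continues as an independent $\mathbb{Q}^1$-branching process rooted at $\xi_{at}^1$ over horizon $(1-a)t$, yielding a sub-population $M^{\mathrm{sp}}$ at time $t$; (b) each of the $N_{at}-1$ non-spine particles at position $y_i$ initiates an independent $\mathbb{P}_{\delta_{y_i}}$-branching process contributing $M^{(i)}$ descendants at time $t$. Hence $N_t=M^{\mathrm{sp}}+\sum_{i=1}^{N_{at}-1}M^{(i)}$. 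The first step applied to the spine over horizon $(1-a)t$ yields $M^{\mathrm{sp}}/t\Rightarrow \mathrm{Gamma}(2,2/((1-a)\Sigma\langle 1,\tilde\varphi\rangle))$. For the other subtrees, each survives to time $t$ with probability $\sim 2\varphi(y_i)/((1-a)t\Sigma)$ and, given survival, its rescaled size is asymptotically $\mathrm{Exp}(2/((1-a)\Sigma\langle 1,\tilde\varphi\rangle))$. Using the spatial concentration $X_{at}/N_{at}\to\tilde\varphi/\langle 1,\tilde\varphi\rangle$ under $\mathbb{Q}^1$ (a consequence of Assumption~\ref{A:Yaglom}.2 applied along the spine), the total expected number of survivors given $\mathcal{F}_{at}^1$ satisfies $2X_{at}[\varphi]/((1-a)t\Sigma)\to 2Z/((1-a)\Sigma\langle 1,\tilde\varphi\rangle)$ in law; a classical Poisson-limit argument (many independent rare trials) then gives that the number $K$ of survivors converges in law to $\mathrm{Poisson}(2Z/((1-a)\Sigma\langle 1,\tilde\varphi\rangle))$. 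The sum of $K$ iid exponentials plus the spine's Gamma(2) (all with the common rate $2/((1-a)\Sigma\langle 1,\tilde\varphi\rangle)$) yields $\hat Z\mid Z\sim \mathrm{Gamma}(2+K,2/((1-a)\Sigma\langle 1,\tilde\varphi\rangle))$, matching the stated joint law.

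To transfer the convergence to $\mathbb{P}_{\delta_x}(\cdot\mid N_t>0)$, apply Corollary~\ref{lem-many2few} with $Y(v)=F(N_{at}/t,N_t/t)/N_t$, for $F$ bounded (so the sum vanishes automatically on $\{N_t=0\}$), to obtain
\[
\mathbb{E}_{\delta_x}\!\big[F(N_{at}/t,N_t/t)\,\mathbf{1}_{\{N_t>0\}}\big]=\varphi(x)\,\mathbb{E}^{\mathbb{Q}^1}_{\delta_x}\!\bigg[\frac{F(N_{at}/t,N_t/t)}{N_t/t}\cdot\frac{\varphi(\xi_t^1)^{-1}}{t}\bigg].
\]
Dividing by $\mathbb{P}_{\delta_x}(N_t>0)$ and letting $t\to\infty$, using $t\,\mathbb{P}_{\delta_x}(N_t>0)\to 2\varphi(x)/\Sigma$ together with the ergodicity of the spine under $\mathbb{Q}^1$ (so $\varphi(\xi_t^1)^{-1}$ asymptotically decouples from the size variables), the right-hand side, after normalisation, converges to $\mathbb{E}[F(Z,\hat Z)/\hat Z]/\mathbb{E}[1/\hat Z]$, which is exactly the size-biased description claimed. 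The hardest steps in this programme will be the Poisson limit for $K$ — which requires uniform control of the survival probabilities over the random positions in $X_{at}$ as well as quantitative spatial concentration of the non-spine particles under $\mathbb{Q}^1$ — together with the uniform integrability required to pass to the limit in the last display, in particular for $1/(N_t/t)$ and $\varphi(\xi_t^1)^{-1}$ under $\mathbb{Q}^1_{\delta_x}$.
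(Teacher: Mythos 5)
Your proposal is correct and takes essentially the same approach as the paper: the paper's proof is literally a reference to Step 5 of the proof of Proposition~\ref{P:split2} (with the second spine removed), which performs exactly the decomposition you describe — condition on $\mathcal{F}_{at}^1$, split $N_t$ into the spine's continuation plus independent $\mathbb{P}$-subtrees rooted at non-spine particles at time $at$, apply the single-time Yaglom asymptotics of Lemma~\ref{L:Yaglom} together with a Poisson approximation for the number of surviving subtrees (via the $\log(1-x)+x$ error term), and then transfer to $\mathbb{P}_{\delta_x}(\cdot\mid N_t>0)$ via the $k=1$ many-to-one identity with the natural martingale and the ergodicity of the spine position. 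The difficulties you flag (the Poisson limit for $K$, the decoupling of $\varphi(\xi_t^1)^{-1}$, uniform integrability of $1/(N_t/t)$) are precisely the points the paper handles in Step~5 via the bound $E_t\in[0,cN_{at}/t^2]$, the $N_t^*/N_t\to1$ argument from Lemma~\ref{L:Yaglom}(c), and the truncation event $A^\delta$.
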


Before moving on to the proofs of the above two propositions, we first state a lemma that will be used throughout the aforementioned proofs. 

\begin{lemma}\label{L:Yaglom}
In what follows, we suppose that $(g_t, t\ge 0)$ are a collection of functions with $g_t\in B_1^+(E)$ for each $t>0$, and such that $g_t\to g\in B^+_1(E)$ pointwise as $t\to \infty$. For any $x\in E$, the following hold.
	\begin{enumerate}
		\item[(a)]  $t\mathbb{P}_{\delta_x}(N_t>0)\to \frac{2\varphi(x)}{\Sigma}$ as $t\to \infty$, $\sup_{t,x} |t\mathbb{P}_{\delta_x} (N_t>0)|<\infty$ and $\inf_{t} |t\mathbb{P}_{\delta_x} (N_t>0)| > 0$.
		\item[(b)]  The joint law of $$\left(\frac{X_t[g_t]}{t},\frac{N_t}{t}\right) \text{ under } \mathbb{P}_{\delta_x}(\cdot | \, N_t>0)$$ converges to that of $(\langle g, \tilde{\varphi} \rangle Z,\langle 1,\tilde{\varphi}\rangle Z)$ as $t\to \infty$, where $Z\sim \mathtt{Exponential}({2}/{\Sigma})$.
		\item[(c)] The joint law of 
		\begin{equation}
		\left(\frac{X_t[g_t]}{t},\frac{N_t}{t}, \xi_t\right) \text{ under } \mathbb{Q}^1_{\delta_x}
		\label{thetriple}
		\end{equation} converges to that of $(\langle g, \tilde{\varphi} \rangle Z, \langle 1,\tilde{\varphi}\rangle Z,\bar\xi)$ as $t\to \infty$, where $Z\sim \mathtt{Gamma}(2,2/\Sigma)$ and $\bar \xi$ is independent of $Z$, with law given by 
		\begin{equation}\label{eq:law-xi}
		P(\bar\xi \in A) = \langle \mathbf{1}_A\varphi, \tilde\varphi \rangle
		\end{equation}
		
		%having density $\varphi \tilde{\varphi}$ with respect to Lebesgue measure on $E$.
	\end{enumerate}
\end{lemma}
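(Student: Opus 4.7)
Part (a) is the classical Yaglom-type asymptotic for the critical survival probability, established under our assumptions in \cite{YaglomNTE}: writing $p_t(x) := \mathbb{P}_{\delta_x}(N_t > 0)$, linearising the nonlinear integral equation satisfied by $p_t$ gives a leading behaviour $p_t(x) \sim 2\varphi(x)/(\Sigma t)$, governed by the mean semigroup $\sT$ (Assumption \ref{A:Yaglom}.2) and the variance $\Sigma$ (Assumption \ref{A:Yaglom}.4), with the uniform upper and lower bounds following from the same analysis and the non-degeneracy of $\varphi$.

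For (b), the marginal convergence $N_t/t \mid \{N_t > 0\} \Rightarrow \langle 1, \tilde\varphi\rangle Z$ with $Z \sim \mathtt{Exp}(2/\Sigma)$ is also in \cite{YaglomNTE}. The joint statement then reduces, via $X_t[g_t]/t = (X_t[g_t]/N_t)(N_t/t)$, to showing the asymptotic equidistribution
\[
\frac{X_t[g_t]}{N_t} \longrightarrow \frac{\langle g, \tilde\varphi\rangle}{\langle 1, \tilde\varphi\rangle} \qquad \text{in } \mathbb{P}_{\delta_x}(\cdot \mid N_t > 0)\text{-probability,}
\]
which we plan to prove by the method of moments. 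Iterated application of Corollary \ref{lem-many2few}, combined with the moment asymptotics developed in \cite{moments} and Assumption \ref{A:Yaglom}, should yield, for each $j, k \geq 0$,
\[
t^{-(j+k)}\, \mathbb{E}_{\delta_x}\!\left[X_t[g_t]^j\, N_t^k \,\big|\, N_t > 0\right] \longrightarrow (j+k)!\, \langle g, \tilde\varphi\rangle^j \langle 1, \tilde\varphi\rangle^k (\Sigma/2)^{j+k},
\]
which are precisely the joint moments of $(\langle g, \tilde\varphi\rangle Z, \langle 1, \tilde\varphi\rangle Z)$. The exponential law being moment-determined, this gives the joint convergence. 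Allowing $g_t$ in place of a fixed $g$ is harmless: since $\|g_t\|_\infty \le 1$ with $g_t \to g$ pointwise, dominated convergence applies to the integrals $\langle g_t, \tilde\varphi\rangle$ etc.\ appearing in the limiting moment formulas.

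For (c), the $k=1$ many-to-few formula with the natural martingale \eqref{specific-zeta} yields, for bounded continuous $F_1, F_2$,
\[
\mathbb{Q}^1_{\delta_x}\!\left[F_1(\xi_t)\, F_2\!\left(\tfrac{X_t[g_t]}{t}, \tfrac{N_t}{t}\right)\right] \;=\; \frac{1}{\varphi(x)}\, \mathbb{E}_{\delta_x}\!\left[X_t[\varphi F_1]\, F_2\!\left(\tfrac{X_t[g_t]}{t}, \tfrac{N_t}{t}\right)\right].
\]
Using (a) together with the equidistribution from (b) (extended to include the additional coordinate $X_t[\varphi F_1]/N_t \to \langle \varphi F_1, \tilde\varphi\rangle / \langle 1, \tilde\varphi\rangle$, by the same moment method), the right-hand side converges to
\[
\frac{2\, \langle \varphi F_1, \tilde\varphi\rangle}{\Sigma\, \langle 1, \tilde\varphi\rangle}\; \mathbb{E}\!\left[\langle 1, \tilde\varphi\rangle Z\; F_2\!\left(\langle g, \tilde\varphi\rangle Z, \langle 1, \tilde\varphi\rangle Z\right)\right]
\]
with $Z \sim \mathtt{Exp}(2/\Sigma)$. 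A direct calculation identifies $Z$ size-biased (by itself) as a $\mathtt{Gamma}(2, 2/\Sigma)$ variable $Z'$, while \eqref{eq:law-xi} gives $\langle \varphi F_1, \tilde\varphi\rangle = \mathbb{E}[F_1(\bar\xi)]$; the limit thus factorises as $\mathbb{E}[F_1(\bar\xi)]\, \mathbb{E}[F_2(\langle g, \tilde\varphi\rangle Z', \langle 1, \tilde\varphi\rangle Z')]$, which is exactly the claimed product form with $\bar\xi$ independent of $Z'$. The main technical obstacle throughout is making the equidistribution quantitative enough (through the second-moment bound supplied by Assumption \ref{A:Yaglom}.4 and the many-to-two formula) to pass to the limit inside the various expectations, especially in (c) where we divide by $N_t$ which is not uniformly bounded below.
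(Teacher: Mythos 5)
Your route diverges from the paper's mainly in parts (b) and (c), and is a valid alternative in spirit, though less complete. For (a), the paper simply cites \cite[Theorem 1.2, Lemmas 7.2 and 7.4]{YaglomNTE}; your sketch is consistent with what those results establish. For (b), the paper does \emph{not} use a moment method: it cites \cite[Theorem 1.3]{YaglomNTE} for the joint convergence of $(X_t[g]/t, N_t/t)$ with fixed $g$ (together with the concentration fact that $t^{-1}(X_t[f] - \langle f,\tilde\varphi\rangle X_t[\varphi]) \to 0$ in probability), and then handles the replacement $g\mapsto g_t$ by a direct $L^1$-estimate of $X_t[g_t - g]/t$ using part (a), Assumption \ref{A:Yaglom}.2 and dominated convergence. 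Your moment method is plausible (the limit law is supported on a line, hence moment-determined), but it rederives an off-the-shelf result and the crucial mixed-moment formula is asserted (``should yield'') rather than established. For (c), your approach is genuinely different and more streamlined: you place $F_1(\xi_t)$ \emph{inside} the many-to-one formula and obtain the factorisation $\mathbb{E}[F_1(\bar\xi)]\,\mathbb{E}[F_2(\cdot)]$ directly, whereas the paper first establishes the first two components via \eqref{showsgamma}, then cites \cite[Section 5]{YaglomNTE} for the marginal convergence of $\xi_t$, and finally proves independence through a tightness/subsequential argument combined with a truncation $N_t^*$ (the contribution to $N_t$ branching off the spine before time $t-t^{1/3}$) and ergodicity of the spine motion. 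Your route buys a shorter argument, but you must still justify the uniform integrability of $X_t[\varphi F_1]/t$ under $\mathbb{P}_{\delta_x}(\cdot\,|\,N_t>0)$ to pass to the limit inside the expectation (a second-moment bound suffices, and you correctly flag this as the main technical obstacle); the paper side-steps this by taking $F(x,y)=\mathrm{e}^{-\theta x - \mu y}$ with $\mu>0$, for which $\tfrac{X_t[\varphi]}{t}F$ is uniformly bounded. One caution: the paper's $N_t^*$ truncation argument is reused in Remark \ref{R:Yaglom} to extend (c) to spine positions at two different times $\xi_{ct},\xi_t$, which your single-time change-of-measure computation does not give for free.
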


\begin{proof}
	(a) follows from \cite[Theorem 1.2]{YaglomNTE}, \cite[Lemma 7.4]{YaglomNTE} and \cite[Lemma 7.2]{YaglomNTE}. 
	For the proof of (b), first note that the joint convergence of $(X_t[g]/t, N_t/t)$ under $\mathbb{P}_{\delta_x}(\cdot | \, N_t>0)$ follows from \cite[Theorem 1.3]{YaglomNTE}, together with the fact that $t^{-1}[X_t[f]-\langle f,\tilde{\varphi}\rangle X_t[\varphi]]\to 0$ in probability as $t\to \infty$ for any bounded $f$ (see the proof of \cite[Theorem 1.3]{YaglomNTE}).
 	Writing 
	\[
	  X_t[g_t] = X_t[g] + X_t[g_t - g],
	\] 
	 it thus suffices to show that $X_t[g_t - g]/t \to 0$ in $\mathbb{P}_{\delta_x}(\cdot | N_t > 0)$-probability. For this, we will show that we actually have $L^1$ convergence. Setting $f_t = |g_t - g|$ (which is bounded by $2$), we have 
	\begin{align*}
	  \frac1t\mathbb{E}_{\delta_x}[|X_t[g_t - g]| | N_t > 0] 
	  &\le \frac{1}{t\mathbb{P}_{\delta_x}(N_t > 0)}\mathbb{E}_{\delta_x}[X_t[f_t]]\\
	  &\le  \frac{1}{t\mathbb{P}_{\delta_x}(N_t > 0)}(|\mathbb{E}_{\delta_x}[X_t[f_t]] - \varphi(x)\langle f_t, \tilde\varphi \rangle| + \varphi(x)\langle f_t, \tilde\varphi \rangle).\\
	\end{align*}
	From Lemma \ref{L:Yaglom}(a), it follows that $(t\mathbb{P}_{\delta_x}(N_t > 0))^{-1}$ is uniformly bounded. By the first part of Assumption \ref{A:Yaglom}, the first term in the parentheses on the right-hand side above converges to $0$ uniformly. Finally, by dominated convergence (since $\tilde\varphi$ is a finite measure), the second term in the parentheses also converges to $0$.

\medskip

		For (c), note that if $W_t^1$ is the martingale from \eqref{COM-k} (in the classical case of one spine),
		\begin{equation}
		  \mathbb{P}_{\delta_x}[W_t^1 | \cF_t] = X_t[\varphi],
		  \label{eq:mg}
		\end{equation}
		so that for any bounded, continuous function $F$,  
		\begin{align}
		  \mathbb{Q}_{\delta_x}^1\left[F\left(\frac{X_t[g_t]}{t}, \frac{N_t}{t}\right)\right] 
		  = \frac{t \mathbb{P}_{\delta_x}(N_t > 0)}{\varphi(x)} 
		  \mathbb{P}_{\delta_x}\left[F\left(\frac{X_t[g_t]}{t}, \frac{N_t}{t}\right)\frac{X_t[\varphi]}{t}{\,\bigg|\, N_t>0}\right].
		  \label{showsgamma}
		\end{align}
		Taking $F$ to be of the form $F(x, y) = {\rm e}^{-\theta x - \mu y}$, for $\theta, \mu \ge 0$ and using (a) and (b) yields 
		  the convergence of the first two components of the triple under $\mathbb{Q}_{\delta_x}^1$. The marginal convergence in law of $\xi_t$ to $\xi$ under $\mathbb{Q}_{\delta_x}$ follows from \cite[section 5]{YaglomNTE}.

	\medskip

		To see the joint convergence in law of the triple  in \eqref{thetriple}, note that due to the aforementioned marginal convergence of $\xi$ and the first two components, we immediately have tightness. Moreover, any subsequential limit has the form $({\langle g, \tilde\varphi\rangle}Z,\langle 1,\tilde{\varphi} \rangle Z,\bar\xi)$, where the marginals of $Z$ and $\bar\xi$ are as desired. Thus it remains to show that for any such subsequential limit, $Z$ and $\bar\xi$ (or equivalently $\langle 1,\tilde{\varphi} \rangle Z$ and $\bar\xi$) are independent.

\medskip

For this, define $N_t^*$ to be the contribution to $N_t$ of all descendants branching off the (single) spine particle \emph{before} time $t-t^{1/3}$. Then $N_t - N^*_t$ behaves like $N_{t^{1/3}}$ under $\mathbb{Q}^1$.  Applying the Markov property at time $t-t^{1/3}$, it follows that $t^{-1/3}({N_t - N_t^*})$ converges in law to  $\langle 1,\tilde{\varphi}\rangle Z$, where $Z\sim \mathtt{Gamma}(2,2/\Sigma)$, thanks to the discussion following \eqref{showsgamma}. Thus, for $\varepsilon > 0$, we have that $\mathbb{Q}_{\delta_x}^{1}(N_t - N_t^* \ge t^{1-\varepsilon}) \to 0$ as $t \to \infty$.  We also have that $\mathbb{Q}_{\delta_x}^{1}(N_t \ge t^{1-\varepsilon/2}) \to 0$, uniformly in $x$, thanks to the Markov inequality and Assumption \ref{A:Yaglom}.2.

{ Now note that,  on the one hand, $N_t^*/N_t\leq 1$. On the other hand, 
$1-(N^*_t/N_t) = (N_t-N^*_t)/N_t = t^{-1}(N_t-N^*_t)/(t^{-1}N_t )$. In the final equality, the numerator tends to zero in $\mathbb{Q}^1$-probability, and the denominator converges under   $\mathbb{Q}^1$ to a Gamma distributed random variable. It follows that  $N_t^*/N_t\to 1$ under $\mathbb{Q}_{\delta_x}^1$ as $t\to \infty$. Next note that the part of the spatial branching tree consisting of all descendants branching off the  spine particle \emph{before} time $t-t^{1/3}$ is conditionally independent (given the position of the spine at time $t-t^{1/3}$) of descendants branching off the spine particle \emph{after} time $t-t^{1/3}$.} This implies that any subsequential {distributional} limit  of $(N_t/t, \xi_t)$ as $t\to \infty$ under $\mathbb{Q}_{\delta_x}^1$, say $(\langle 1,\tilde{\varphi}\rangle Z,\bar\xi)$, can be extended to a subsequential limit $(Y^*,\langle 1,\tilde{\varphi}\rangle Z,\bar\xi)$ of $(N_t^*/t,N_t/t,\bar\xi)$ satisfying $Y^*=\langle 1,\tilde{\varphi}\rangle Z$ almost surely. On the other hand, ergodicity of the spine motion \cite[section 5]{YaglomNTE} implies that $Y^*$ and $\bar\xi$ are independent. That is to say, $Z$ and $\bar\xi$ are independent. 
%In terms of the subsequential limit $(Z,\langle 1,\tilde{\varphi}\rangle Z,\xi)$, this means that $\langle 1,\tilde{\varphi}\rangle Z$ and $\xi$ are independent, as required.
\end{proof}

\begin{rem}\rm
	\label{R:Yaglom}
Observe that a slight variant of the proof of (c) given above, implies that for any $c\in (0,1]$, the joint law of $(X_t[g_t]/t,N_t/t,\xi_{ct},\xi_t)$ under $\mathbb{Q}_{\delta_x}^1$ also converges to that of $(\langle g , \tilde{\varphi} \rangle Z,\langle 1,\tilde{\varphi} \rangle Z,\bar\xi, \bar\xi')$ as $t\to\infty$, where $(\bar\xi,\bar\xi',Z)$ are mutually independent and $\bar\xi, \bar\xi'$ both have law \eqref{eq:law-xi}.
\end{rem}

We are now ready to prove Proposition \ref{P:split2}. However, let us first give a sketch proof, which (roughly) describes the main steps of the argument (although some details look slightly different in the final version, in order to deal with technicalities that arise).

 \begin{proof}[Sketch proof of Proposition \ref{P:split2}.] 
	For $v,w\in \Omega$, let $\tau_{v,w}$ be the split time of $v$ and $w$. That is, the death time of the most recent common ancestor of $v$ and $w$.
	\begin{itemize}
		\item We first show that the asymptotic probability of selecting $w\in \cN_{at}$ and $v\in \cN_t$ with $w\preceq v$ is zero. Roughly speaking, this is because the probability that any specific individual at time $at$ has \emph{any} descendants at time $t$, tends to $0$ as $t\to \infty$.  This means that to obtain the asymptotic law of the split time, it is enough to provide an asymptotic for $$\mathbb{P}_{\delta_x}\bigg[  \sum_{w\in \cN_{at}, v\in \cN_{t}, w\npreceq v} \frac{1}{N_{at} \hat{N}^w_t} F\big(\frac{\tau_{v,w}}{t}\big)\, \bigg| \, N_t>0\bigg]$$ when $F$ is an arbitrary bounded continuous function, and where for $w\in \cN_{at}$, $\hat{N}^w_t$ is the size of the population at time $t$ \emph{without} counting the descendants of $w$. 
		\emph{(This roughly corresponds to Step 1 in the full proof below).}
		
		\item The many-to-two lemma at times $at,t$ allows the above expectation to be written as 
		$$	\frac{\varphi(x)}{\mathbb{P}_{\delta_x}(N_t>0)}\mathbb{Q}_{\delta_x}^2\left[ F(\frac{\tau}{t})\mathbf{1}_{\{\tau\le at\}} \frac{\varphi(\xi^1_{\tau-})}{\varphi(\xi^1_t)\varphi(\xi^2_{at})} \frac{1
		}{N_{at}\hat{N}_{t}} \e^{\int_0^\tau \beta(\xi^1_s)(\sm_2(\xi^1_s)-\sm_1(\xi^1_s)) \, \d s}
		\right]$$
		where $\tau$ is the split time of the two spines under $\mathbb{Q}_{\delta_x}^{2}$ and $\hat{N}_t$ is the population size at time $t$, not counting descendants of the second spine at time $at$. \emph{(This roughly corresponds to Step 3 in the full proof below).}
		
		\begin{figure}[h]
			\centering
			\includegraphics[width=.5\textwidth]{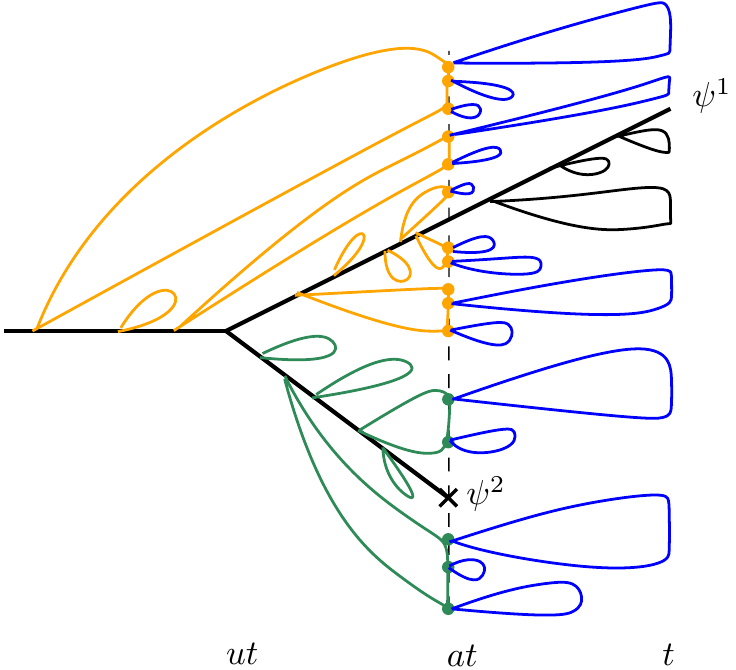}
			\caption{Suppose the two spines split from each other at time $\tau=ut$. The population at time $at$ can be broken up into those individuals that have branched off the first spine before time $at$ (depicted in orange) and those individuals that have branched off the second spine between times $ut$ and $at$ (depicted in green). %Under $\hat{\mathbb{Q}}^2_{\delta_x}(\cdot|\tau=ut)$ the size of the orange population is asymptotically $t$ times $aZ$ where $Z\sim \mathtt{Gamma}(2,2/\Sigma\langle 1, \tilde\varphi \rangle )$, and the size of the green population is asymptotically independent, and asymptotically $t$ times $(a-u)Z'$ where $Z\sim \mathtt{Gamma}(2,2/\Sigma\langle 1, \tilde\varphi \rangle)$. 
				Given the population at time $at$, the size of the population at time $t$ (without the descendants of the second spine, that is, $\hat{N}_t$) can again be broken up into two subpopulations: those that branch off the first spine between times $at$ and $t$ (depicted in black) and those that are descendants of non-spine particles at time $at$ (depicted in blue). 
				%The number of particles at time $at$ that have descendants at time $t$ is asympotically Poisson distributed (conditional on the size of the population at time $at$), and for each particle that does, the number of descendants at time $t$ is asympotically like $t$ times a $ \mathtt{Gamma}(1,2/\Sigma\langle 1, \tilde\varphi \rangle)$ random variable.
			}
		\label{fig:sketch}
		\end{figure}
		
		\item Next we consider $\hat{\mathbb{Q}}_{\delta_x}^2$ obtained by reweighting $\mathbb{Q}_{\delta_x}^2$ by $$(\beta(\xi_{\tau^- }^1)(\sm_2(\xi_{\tau^- }^1)-\sm_1(\xi_{\tau^- }^1)))^{-1} \e^{\int_0^\tau \beta(\xi^1_s)(\sm_2(\xi^1_s)-\sm_1(\xi^1_s)) \d s -\tau}.$$ 
		This change of measure alters the rate at which the spine particles split into two distinct spines (from rate $\beta(\sm_2 - \sm_1)$ to rate $1$) but doesn't affect the rate at which branching events occur that don't result in the spines splitting.
		Combining this with a change of variables and conditioning on $\tau$ (which has an exponential $1$ distribution under $\hat{\mathbb{Q}}^2_{\delta_x}$), we rewrite our expectation again, as 
		$$	\frac{\varphi(x)}{t\mathbb{P}_{\delta_x}(N_t>0)}\int_0^a \d u F(u) \hat{\mathbb{Q}}_{\delta_x}^2\left[ \frac{
			\beta(\xi^1_{ut}) \varphi(\sm_2(\xi^1_{ut})-\sm_1(\xi^1_{ut}))
		}{\varphi(\xi^1_t)\varphi(\xi^2_{at})} \frac{t^2}{N_{at}\hat{N}_{t}} \, \bigg| \, \tau=ut
		\right].$$
		Here the law $\hat{\mathbb{Q}}_{\delta_x}^2\left[\cdot | \tau=ut\right]$ makes rigorous sense: the system has a single spine and in fact evolves as under $\mathbb{Q}_{\delta_x}^1$ until time $ut$, where some (biased) branching event occurs, two spines are selected, and each of these initiates an independent $\mathbb{Q}^1$ process. \emph{(This roughly corresponds to Step 4 in the full proof below).}
		\item Now, we know by Lemma \ref{L:Yaglom} that $\varphi(x)(t\mathbb{P}_{\delta_x}(N_t>0))^{-1} \to \Sigma/2$ as $t\to \infty$. Moreover, under $\hat{\mathbb{Q}}^2_{\delta_x}$, similar arguments to those in the proof of Lemma \ref{L:Yaglom} (in particular, due to ergodicity of the spine motion) imply that the positions $\xi_{ut}^1, \xi_t^1, \xi_{at}^2$ of the spines are asymptotically independent of each other and of $N_{at}, \hat{N}_t$ as $t\to \infty$, with limiting laws described by $P(\xi \in A) = \langle \mathbf{1}_A\varphi, \tilde\varphi \rangle$ for $A\subset E$. 
		
		Furthermore, the limiting law of $N_{at}/t$ as $t\to \infty$ is described by $aZ+(a-{u})Z'$, where $(Z,Z')$ are a pair of independent $\mathtt{Gamma}(2,\, {2}/{\Sigma\langle 1,\tilde{\varphi}\rangle })$ random variables; this is because of the explicit description of the process under $\hat{\mathbb{Q}}_{\delta_x}^2(\cdot| \tau=ut)$ and item (c) of Lemma \ref{L:Yaglom}. In Figure \ref{fig:sketch}, $aZ$ and $(a-u)Z'$ correspond to the sizes of the orange and green populations respectively (after rescaling by $t$).

		Finally, the conditional limiting law of $\hat{N}_t/t$ given $N_{at}/t$ is that of a Gamma random variable with parameter $(2+K,\, {2}/{ \Sigma(1-a)\langle 1,\tilde{\varphi}\rangle })$, where $K\sim \mathtt{Poisson}({2N}/{(1-a)\Sigma \langle 1,\tilde{\varphi}\rangle } N)$ is itself random. This is because, given the collection of particles alive at time $at$, the first spine particle will have a number of offspring at time $t$ which is asymptotically like $t$ times a $\mathtt{Gamma}(2,{2}/{\Sigma(1-a)\langle 1,\tilde{\varphi}\rangle })$ random variable (Lemma \ref{L:Yaglom}(c) again; this corresponds to the population depicted in black in Figure \ref{fig:sketch}). Then, independently, each of the non-spine particles alive will have some descendant alive at time $t$ with probability asymptotically proportional to $t^{-1}$ times $\varphi$ of their positions. Using (essentially) the Poisson approximation of the binomial distribution, this results in a total number of non-spine particles with some descendant alive at time $t$ having asymptotic conditional distribution given by a $\mathtt{Poisson}({2N}/{(1-a)\Sigma \langle 1,\tilde{\varphi}\rangle } N)$ random variable. By Lemma \ref{L:Yaglom} (b), the number of offspring of each of these will approximately $t$ times an independent $\mathtt{Exponential}({2}/{\Sigma(1-a)\langle 1,\tilde{\varphi}\rangle })$, that is, a $\mathtt{Gamma}(1, {2}/{ \Sigma(1-a)\langle 1,\tilde{\varphi}\rangle})$ random variable. This corresponds to the population depicted in blue in Figure \ref{fig:sketch}. The additivity property of independent Gamma distributions completes the argument. \emph{(This roughly corresponds to Step 5 in the full proof below).}
		\item Plugging these asymptotics into the $\hat{\mathbb{Q}}_{\delta_x}^2$ expectation, and performing some simple explicit computations, we obtain the desired formula. \emph{(This roughly corresponds to Steps 6 and 7 in the full proof below).}
\end{itemize}
\end{proof}

\begin{proof}[Proof of Proposition \ref{P:split2}]
	Fix $0<a<1$ and for $w\in \cN_{at}$ and $v\in \cN_t$, write $\tau_{v,w}$ for the % {\color{red} death time of the most recent common ancestor of $v$ and $w$ (equiv. the last time that the genealogies of  $v$ and $w$ overlap).} 
	split time of $v$ and $w$ (as in the sketch proof).	It suffices to show that, for each continuous $F:[0,\infty)\to [0,1]$,
	\begin{equation}\label{E:m22goal1} \mathbb{P}_{\delta_x}\left[ \frac{1}{N_{at}N_t} \sum_{v\in \mathcal{N}_t, w\in \mathcal{N}_{at}} F({\tau_{v,w}}/{t})\, \bigg| \, N_t>0 \right] \to 
		\int_0^a F(u) f_a(u) \d u \end{equation}
	as $t\to \infty$.

	\medskip
	{\bf Step 1} In order to apply the many-to-two lemma, we first need to write the left-hand side of \eqref{E:m22goal1} in a slightly different form (that is asymptotically equivalent). We claim that
	\begin{equation}\label{E:m22split} 
		\mathbb{P}_{\delta_x}\bigg[ \frac{1}{N_{at}N_t} \sum_{v\in \mathcal{N}_t, w\in \mathcal{N}_{at}} F({\tau_{v,w}}/{t})% \mathbf{1}_{A^{\delta}_{v,w}}
		\, \bigg| \, N_t>0 \bigg] \sim \mathbb{P}_{\delta_x}\bigg[  \sum_{\substack{v\in \mathcal{N}_t, w\in \mathcal{N}_{at} \\ w\npreceq v}} \frac{%\mathbf{1}_{A^{\delta}_{v,w}}
		1}{N_{at}\hat{N}_t^w}F({\tau_{v,w}}/{t}) \, \bigg| \, N_t>0 \bigg]\end{equation}
	as $t\to \infty$ where $\hat{N}_t^w=N_t-N_t^w$ and $N_t^w=|\{v\in \mathcal{N}_t: w\preceq v\}|$.

\medskip

To prove this claim, let us consider the difference of the two quantities in the above asymptotic. Recalling that $F\in[0,1]$, we have
\begin{align*}
&\bigg|\mathbb{P}_{\delta_x}\bigg[ \frac{1}{N_{at}N_t} \sum_{v\in \mathcal{N}_t, w\in \mathcal{N}_{at}} F({\tau_{v,w}}/{t})
		\, \big| \, N_t>0 \bigg] 
		- \mathbb{P}_{\delta_x}\bigg[  \sum_{\substack{v\in \mathcal{N}_t, w\in \mathcal{N}_{at} \\ w\npreceq v}} \frac{%\mathbf{1}_{A^{\delta}_{v,w}}
		1}{N_{at}\hat{N}_t^w}F({\tau_{v,w}}/{t}) \, \big| \, N_t>0 \bigg] \bigg| \\
		&\le \bigg| \mathbb{P}_{\delta_x}\bigg[\sum_{\substack{v\in \mathcal{N}_t, w\in \mathcal{N}_{at} \\ w\preceq v}} \frac{F(\tau_{v, w}/t)}{N_t N_{at}}\big| N_t > 0 \bigg]\bigg| 
		+ \bigg| \mathbb{P}_{\delta_x}\bigg[\sum_{{\substack{v\in \mathcal{N}_t, w\in \mathcal{N}_{at} \\ w\npreceq v}}} \frac{F(\tau_{v, w}/t)}{N_{at}}(\frac{1}{\hat{N}_t^w} - \frac{1}{N_t})\big| N_t > 0\bigg]\big| \\
		& \le \mathbb{P}_{\delta_x}\bigg[\sum_{v \in \cN_t}\frac{1}{N_tN_{at}}\big| N_t > 0\bigg] + \mathbb{P}_{\delta_x}\bigg[\frac{1}{N_{at}}\sum_{w \in \cN_{at}}\frac{N_t^w}{N_t}\big| N_t > 0 \bigg]\\
		&= 2 \mathbb{P}_{\delta_x}\bigg[\frac{1}{N_{at}}\bigg| N_t > 0\bigg],
\end{align*}
where we have used the simple decomposition $\textstyle \sum_{w \in \cN_{at}}{N_t^w} ={N_t}$ in the final equality.
Note that, for arbitrarily large $M>0$,
\begin{align}
\mathbb{P}_{\delta_x}\bigg[\frac{1}{N_{at}}\bigg| N_t > 0\bigg]&\leq 
\frac{1}{M}\mathbb{P}_{\delta_x}\bigg[N_{at}\geq M \bigg| N_t > 0\bigg]+ 
\mathbb{P}_{\delta_x}\bigg[N_{at}\leq M \bigg| N_t > 0\bigg].
\label{2terms}
\end{align}
The first term on the right-hand side of \eqref{2terms} can be made 
arbitrarily small by taking $M$ sufficiently large. Moreover, the second term on the right-hand side of \eqref{2terms}
 converges to zero by Lemma \ref{L:Yaglom}.

\medskip

 It therefore suffices to prove that
 \begin{equation}\label{E:m22goal2}
\mathbb{P}_{\delta_x}\bigg[  \sum_{\substack{v\in \mathcal{N}_t, w\in \mathcal{N}_{at} \\ w\npreceq v}} \frac{%\mathbf{1}_{A^{\delta}_{v,w}}
1}{N_{at}\hat{N}_t^w}F({\tau_{v,w}}/{t}) \, \bigg| \, N_t>0 \bigg] \to \int_0^a F(u) f_a(u) \d u, \quad \text{as } t \to \infty
 \end{equation}
 This will be the new goal for the remainder of the proof.
	
	\medskip
	
	{\bf Step 2} 
In order to apply some bounded convergence results, it is convenient to define the following event for $\delta>0$. Namely, we write $A^\delta_{v,w}$ for the event that \[ \varphi(X_v(t))\ge \delta \text{ , } \varphi(X_w(at))\ge \delta \text{ , } \varphi(X_{v}(\tau_{v,w}^-))=\varphi(X_w(\tau_{v,w}^-))\ge\delta \text{ , }  \frac{N_{at}}t\ge \delta \text{ and } \frac{\hat{N}_t^w}t\ge \delta.\]
	We claim that it suffices to show that for each $\delta>0$ and continuous $F:[0,\infty)\to [0,1]$,
		\begin{equation}\label{E:m22goal} \mathbb{P}_{\delta_x}\bigg[ \sum_{\substack{v\in \mathcal{N}_t, w\in \mathcal{N}_{at} \\ w\npreceq v}} \frac{\mathbf{1}_{A^\delta_{v,w}}}{N_{at}\hat{N}^w_t}  F({\tau_{v,w}}/{t})\, \bigg| \, N_t>0 \bigg] \to \frac{c_\delta}{\langle 1, \tilde{\varphi}\rangle^2\Sigma} 
			\int_0^a F(u) f_a^\delta(u) \d u \end{equation}
	as $t\to \infty$, where 
		\begin{equation}\label{c-delta} 
		c_\delta:=\langle {\mathbf{1}_{\{\varphi\ge \delta\}}}, \tilde{\varphi}\rangle^2
		\langle \beta\varphi^2 (\sm_2-\sm_1)\mathbf{1}_{\{\varphi\ge \delta\}}, \tilde{\varphi}\rangle
		\end{equation}
		and for some $f_a^\delta(u)\nearrow f_a(u)$ as $\delta\searrow 0$, pointwise on $[0,a]$. 
		To see why \eqref{E:m22goal} suffices, note that using the definitions of $\mathbb{V}$ and $\sm_k$, 
		\[
		  \langle \beta \varphi^2(\sm_2-\sm_1)\mathbf{1}_{\{\varphi\ge \delta\}}, \tilde{\varphi}\rangle
		  = \langle \beta \mathbb{V}[\varphi]\mathbf{1}_{\{\varphi\ge \delta\}},  \tilde{\varphi}\rangle.
		\]
		Thus, using the boundedness of $\tilde\varphi$, $\varphi$, $\beta$ and Assumption \ref{A:Yaglom}.2, it follows that $c_\delta \uparrow \langle 1, \tilde{\varphi}\rangle^2\Sigma$ as $\delta\downarrow 0$.
		Moreover, since we know by Remark \ref{R:density1} that $f_a(u)$ integrates to $1$ over $[0,a]$, we can take $F\equiv 1$ in \eqref{E:m22goal} to see that  
	\[ \lim_{\delta\to 0} \lim_{t\to \infty }\mathbb{P}_{\delta_x}\bigg[ \sum_{\substack{v\in \mathcal{N}_t, w\in \mathcal{N}_{at} \\ w\npreceq v}} \frac{\mathbf{1}_{(A^\delta_{v,w})^c}}{N_{at}\hat{N}^w_t}  \, \bigg| \, N_t>0 \bigg] = \lim_{\delta\to 0} \lim_{t\to \infty } \bigg(1- \mathbb{P}_{\delta_x}\bigg[ \sum_{\substack{v\in \mathcal{N}_t, w\in \mathcal{N}_{at} \\ w\npreceq v}} \frac{\mathbf{1}_{A^\delta_{v,w}}}{N_{at}\hat{N}^w_t}  \, \bigg| \, N_t>0 \bigg]\bigg) = 0. \]
	Thus, given \eqref{E:m22goal}, we can take $t\to \infty$ and then $\delta\downarrow 0$ to deduce that the right-hand side of \eqref{E:m22split} converges to the right-hand side of \eqref{E:m22goal1} as $t\to \infty$.

	\medskip

	The remaining steps will focus on the proof of \eqref{E:m22goal} for an appropriate choice of $f_a^\delta$ and with $c_\delta$ defined in \eqref{c-delta}.
	
		\medskip
	{\bf Step 3} %Let us now move on to the proof of \eqref{E:m22goal}. %Because $N_{at}, \hat{N}_t^w, \tau_{v,w}$ are \ellen{XXX}-measurable, w
	We may apply the many-to-two formula, Lemma \ref{lem-many2few}, with $s_1=t, s_2=at$, to write
	\begin{align}\label{E:m22split_2}
		\mathbb{P}_{\delta_x}&\bigg[  \sum_{v\in \mathcal{N}_t, w\in \mathcal{N}_{at}, w\npreceq v} \frac{\mathbf{1}_{A^{\delta}_{v,w}}
		}{N_{at}\hat{N}_t^w}F({\tau_{v,w}}/{t}) \, \bigg| \, N_t>0 \bigg] \nonumber \\ = &	\frac{\varphi(x)}{\mathbb{P}_{\delta_x}(N_t>0)}\mathbb{Q}_{\delta_x}^2\left[ F({\tau}/{t})\mathbf{1}_{\{\tau\le at\}} \frac{\varphi(\xi^1_{\tau-})}{\varphi(\xi^1_t)\varphi(\xi^2_{at})} \frac{\mathbf{1}_{A^\delta_t}
	}{N_{at}\hat{N}_{t}} \e^{\int_0^\tau \beta(\xi^1_s)(\sm_2(\xi^1_s)-\sm_1(\xi^1_s)) \, \d s}
		\right]
	\end{align}
	where $\tau=\tau_{\psi_t^1,\psi_{at}^2}$, $\hat{N}_t=\hat{N}_t^{\psi_{at}^2}$, and $A^\delta_t=A^\delta_{\psi^1_t,\psi^2_{at}}$. The application of Lemma \ref{m2k-ktimes} is justified since $$Y = F\big({\tau}/{t}\big)\mathbf{1}_{\{\tau \le at\}}\frac{\mathbf{1}_{A^\delta_t}}{N_{at}\hat{N}_t}$$ is $\cF_{ (at, t)}^2$- measurable.
	
	\medskip

	{\bf Step 4} Next we consider $\hat{\mathbb{Q}}_{\delta_x}^2$ obtained by reweighting $\mathbb{Q}_{\delta_x}^2$ by $$\frac{1}{\beta(\xi_{\tau^- }^1)(\sm_2(\xi_{\tau^- }^1)-\sm_1(\xi_{\tau^- }^1))} \e^{\int_0^\tau \beta(\xi^1_s)(\sm_2(\xi^1_s)-\sm_1(\xi^1_s)) \d s -\tau}.$$ 
	This change of measure alters the rate at which the spine particles split into two distinct spines (from rate $\beta(\sm_2 - \sm_1)$ to rate $1$). Note, however, that it doesn't affect the rate at which branching events occur that don't result in the spines splitting.
	 Combining this with a change of variables and conditioning on $\tau$, it follows that the right-hand side of \eqref{E:m22split_2} is equal to 
	\begin{equation}\label{E:m22split_3}
		\frac{\varphi(x)}{t\mathbb{P}_{\delta_x}(N_t>0)}\int_0^a \d u F(u) \hat{\mathbb{Q}}_{\delta_x}^2\left[ \frac{
			\beta(\xi^1_{ut}) \varphi(\xi^1_{ut})(\sm_2(\xi^1_{ut})-\sm_1(\xi^1_{ut}))\mathbf{1}_{A^\delta_t}
		}{\varphi(\xi^1_t)\varphi(\xi^2_{at})} \frac{t^2}{N_{at}\hat{N}_{t}} \, \bigg| \, \tau=ut
		\right].
	\end{equation}
	Note that under $\hat{\mathbb{Q}}_{\delta_x}^2(\cdot | \tau = ut)$, the process behaves as follows.
	\begin{itemize} 
	\item Until time $ut$, the process moves with biased motion as in \eqref{k-motion-bias} with $\zeta$ given by \eqref{specific-zeta}. At rate $\beta \sm_1$ branching events occur, at which point, the offspring distribution is given by $\mathcal{P}^{2, 1}$ (as in \eqref{bias-offspring}) and the $i$-th particle is chosen to be the spine with probability proportional to $\varphi(x_i)$.
	\item At time $ut$ a branching event occurs, where the law of the offspring is given by $\mathcal{P}^{2, 2}$ and particles
	$i, j$ with $i\neq j$ are chosen as the two (distinct) spines with probability proportional to $\varphi(x_i)\varphi(x_j)$.	
	\item After time $ut$, the processes issued from the two spine particles evolve under $\mathbb{Q}^1$ and those issued from the non-spine particles evolve under $\mathbb{P}$.
	\end{itemize}
	Note also that by Lemma \ref{L:Yaglom},  $\varphi(x)/(t\mathbb{P}_{\delta_x}(N_t>0))\to \Sigma/2$ as $t\to \infty$. 
	Thus, if we write $\hat{\mathbb{Q}}^2_{\delta_x,ut}(\cdot):=\hat{\mathbb{Q}}_{\delta_x}^2(\cdot \, | \, \tau=ut)$, we have that 
	
	\begin{align}
		\mathbb{P}_{\delta_x}&\Bigg[  \sum_{\substack{v\in \mathcal{N}_t, \, w\in \mathcal{N}_{at} \\ w\npreceq v}} \frac{\mathbf{1}_{A^{\delta}_{v,w}}
		}{N_{at}\hat{N}_t^w}F({\tau_{v,w}}/{t}) \, \bigg| \, N_t>0 \Bigg]\notag\\
		&\sim \frac{\Sigma}{2} \int_0^a \d u F(u) \hat{\mathbb{Q}}_{\delta_x,ut}^2\bigg[ \frac{\beta(\xi^1_{ut}) \varphi(\xi^1_{ut})(\sm_2(\xi^1_{ut})-\sm_1(\xi^1_{ut}))}{\varphi(\xi^1_t)\varphi(\xi^2_{at})} \frac{t^2
			\mathbf{1}_{A_t^\delta}
		}{N_{at}\hat{N}_{t}}
		\bigg],
\label{integral_asymptotic}
	\end{align}
	as $t\to \infty$. 
	
	\medskip
	{\bf Step 5}
Our next goal is to describe the limit in law of $(\xi_{ut}^1, \xi_{t}^1,\xi_{at}^2, N_{at}/t, \hat{N}_t/t)$ under $\hat{\mathbb{Q}}_{x,ut}^2$ as $t\to \infty$. More precisely, we claim that it converges to $(\bar\xi, \bar\xi', \bar\xi'', N, \hat{N})$ where 
\begin{itemize} \item $\bar\xi, \bar\xi', \bar\xi''$ are independent of eachother and of $(N,\hat{N})$, each with law given by \eqref{eq:law-xi};
	\item the law of $N$ is that of  $a Z+(a-u)Z'$, where $(Z,Z')$ are a pair of independent $\mathtt{Gamma}(2, \, 2/ \Sigma\langle 1,\tilde{\varphi}\rangle  )$ random variables;
	\item  conditionally on $N$, the law of $\hat{N}$ is that of a $\mathtt{Gamma}(2+K,\,2/ \Sigma(1-a)\langle 1,\tilde{\varphi}\rangle )$ random variable with random $K\sim \mathtt{Poisson}({2N}/{(1-a)\Sigma \langle 1,\tilde{\varphi}\rangle } N)$.
\end{itemize}
To justify this claim, we identify the limiting Laplace transform of $(\xi_{ut}^1, \xi_{t}^1,\xi_{at}^2, N_{at}/t, \hat{N}_t/t)$. To this end, for arbitrary $\theta, \mu, \eta, \rho,\chi\ge 0$, let us consider
\begin{multline}\label{E:Laplace5}
	\hat{\mathbb{Q}}_{x,ut}^2\big[{\rm e}^{-\theta \xi_{ut}^1}{\rm e}^{-\mu \xi_t^1}{\rm e}^{ - \eta \xi_{at}^2}{\rm e}^{- \rho N_{at}/t}{\rm e}^{-\chi {\hat N_t}/{t}} \big] \\= \hat{\mathbb{Q}}_{x,ut}^2\big[{\rm e}^{-\theta \xi_{ut}^1- \eta \xi_{at}^2 - \rho {N_{at}}/{t}} \, \hat{\mathbb{Q}}_{x,ut}^2 \big[{\rm e}^{-\mu\xi_t^1}{\rm e}^{-\chi {\hat N_t}/{t}}\, | \, \mathcal{F}_{at}^2 \big] \big],
	\end{multline}
where $\mathcal{F}_{at}^2$ is the $\sigma$-algebra containing all the information about the process, including the spines, up to time $at$.

\medskip

Recalling the description of the process under $\hat{\mathbb{Q}}_{x,ut}^2$, we see that 
\begin{align} & \hat{\mathbb{Q}}_{\delta_x,ut}^2 \big[{\rm e}^{-\mu\xi_t^1}{\rm e}^{-\chi \hat{N}_t/t}\, | \, \mathcal{F}_{at}^2 \big] = \mathbb{Q}^1_{\delta_{\xi_{at}^1}}\big[{\rm e}^{-\mu \xi_{(1-a)t}}{\rm e}^{-\chi {N_{(1-a)t}}/{t}} \big] \prod_{\substack{v\in \cN_{at} \\ v\ne \psi_{at}^1,\psi_{at}^2}} \mathbb{P}_{\delta_{X_v(at)}}\big[{\rm e}^{-\chi {N_{(1-a)t}}/{t}} \big] \nonumber \\
	& =  \mathbb{Q}^1_{\delta_{\xi_{at}^1}}\big[{\rm e}^{-\mu \xi_{(1-a)t}}{\rm e}^{-\chi{N_{(1-a)t}}/{t}} \big] \exp\bigg( \sum_{\substack{v\in \cN_{at} \\ v\ne \psi_{at}^1,\psi_{at}^2}} \log\big(1-(1-\mathbb{P}_{\delta_{X_v(at)}}[{\rm e}^{-\chi {N_{(1-a)t}}/{t}} ])\big) \bigg) \nonumber \\
	& =  \mathbb{Q}^1_{\delta_{\xi_{at}^1}}\big[{\rm e}^{-\mu \xi_{(1-a)t}}{\rm e}^{-\chi {N_{(1-a)t}}/{t}} \big] (1-E_t) \, \exp\bigg( \sum_{\substack{v\in \cN_{at} \\ v\ne \psi_{at}^1,\psi_{at}^2}} -(1-\mathbb{P}_{\delta_{X_v(at)}}[{\rm e}^{-\chi {N_{(1-a)t}}/{t}} ]) \bigg) 	\label{E:Qdecomp} \end{align}
where 
\[ E_t := 1-\exp\bigg(\sum_{\substack{v\in \cN_{at} \\ v\ne \psi_{at}^1,\psi_{at}^2}} \log\big(1-(1-\mathbb{P}_{\delta_{X_v(at)}}[{\rm e}^{-\chi {N_{(1-a)t}}/{t}} ])\big)+(1-\mathbb{P}_{\delta_{X_v(at)}}[{\rm e}^{-\chi {N_{(1-a)t}}/{t}} ])\bigg).
\]
We claim that $E_t$ belongs to $
 [0,({cN_{at}}/{t^2})\wedge 1] $
for some absolute deterministic constant $c$, by Lemma \ref{L:Yaglom}(a). 
The lower bound of zero and the upper bound of 1 are  obvious. To see where the upper bound of ${cN_{at}}/{t^2} $comes from, note that there are at most $N_{at}$ elements of the sum defining $E_{t}$,  $\log(1-x)+x\leq -x^2/2$ and 
$\textstyle 1-\mathbb{P}_{\delta_{X_v(at)}}[{\rm e}^{-\chi {N_{(1-a)t}}/{t}} ]\leq \chi\sup_{x\in E}\mathbb{P}_{\delta_{x}}[ 
N_{(1-a)t} ]/t\leq K/t$, for some appropriate $K>0$ (where this final inequality follows from criticality and Assumption \ref{A:Yaglom}).

\medskip

Note also that by Lemma \ref{L:Yaglom}(c), 
\[ \mathbb{Q}^1_{\delta_{\xi_{at}^1}}\big[{\rm e}^{-\mu \xi_{(1-a)t}}{\rm e}^{-\chi{N_{(1-a)t}}/{t}} \big]=s^2 \langle \varphi{\rm e}^{-\mu \cdot}, \tilde{\varphi}\rangle (1+e_t(\xi_{at}^1))\]
where $s:=(1+\chi {\Sigma(1-a) \langle 1,\tilde \varphi\rangle }/{2})^{-1}<1$ and  $e_t(x)$ is such that $e_t(x)\to0$ in the pointwise sense on $E$ and
\[\sup_{ t,x}|e_t(x)|<\infty.\] 
Let us further denote for $x\in E, t\ge 0$,
\[ g_t(x):=t\big(1-\mathbb{P}_{\delta_x}[{\rm e}^{-\chi {N_{(1-a)t}}/{t}}]\big)>0\]
so that by Lemma \ref{L:Yaglom}, 
$\sup\nolimits_{t,x} g_t(x) <\infty $ and 
\begin{align*}
 g_t(x) = t\mathbb{P}_{\delta_x}(N_{(1-a)t}>0) \big(1-\mathbb{P}_{\delta_x}({\rm e}^{-\chi {N_{(1-a)t}}/{t} 
 } \, | \, N_{(1-a)t}>0
 )\big) \to \frac{2(1-s)}{\Sigma(1-a)} \varphi(x)  
 \end{align*} 
 pointwise on $E$ as $t \to\infty$.
Using this notation in the right hand side of \eqref{E:Qdecomp}, we see that 
\begin{equation*}
	\hat{\mathbb{Q}}_{\delta_x,ut}^2 \big[{\rm e}^{-\mu\xi_t^1}{\rm e}^{-\chi \hat{N}_t/t}\, | \, \mathcal{F}_{at}^2 \big] = s^2 \langle \varphi{\rm e}^{-\mu \cdot}, \tilde{\varphi}\rangle{\rm e}^{-\frac{X_{at}[g_t]}{t}} (1+e_t(\xi_{at}^1))(1-E_t)
\end{equation*}
so that 
\begin{multline}\label{E:bounded}
		\hat{\mathbb{Q}}_{x,ut}^2\big[{\rm e}^{-\theta \xi_{ut-}^1}{\rm e}^{-\mu \xi_t^1}{\rm e}^{ - \eta \xi_{at}^2}{\rm e}^{- \rho {N_{at}}/{t}}{\rm e}^{-\chi {\hat N_t}/{t}} \big] \\ = s^2 \langle \varphi{\rm e}^{-\mu \cdot}, \tilde{\varphi}\rangle \hat{\mathbb{Q}}_{x,ut}^2\big[{\rm e}^{-\theta \xi_{ut-}^1- \eta \xi_{at}^2 - \rho {N_{at}}/{t}}{\rm e}^{-{X_{at}[g_t]}/{t}} (1+e_t(\xi_{at}^1))(1-E_t) \big].
\end{multline}
Now, we claim that under $\hat{\mathbb{Q}}_{x,ut}^2$, \begin{equation}\label{E:6bitsconverge}(\xi_{ut}^1, \xi_{at}^2, e_t(\xi_{at}^1), N_{at}/t, X_t[g_t]/t, E_t)\Rightarrow (\bar \xi, \bar \xi'', 0, N, \tfrac{2(1-s)}{\Sigma(1-a)\langle 1, \tilde\varphi \rangle } N, 0)\end{equation} as $t\to \infty$, where $(\bar\xi,\bar\xi'',N)$ have joint law as described in the bullet points at the start of Step 5. Since everything inside the expectation on right-hand side of \eqref{E:bounded} is deterministically bounded, \eqref{E:6bitsconverge} implies that 
\begin{multline*}
	\hat{\mathbb{Q}}_{x,ut}^2\big[{\rm e}^{-\theta \xi_{ut-}^1}{\rm e}^{-\mu \xi_t^1}{\rm e}^{ - \eta \xi_{at}^2}{\rm e}^{- \rho {N_{at}}/{t}}{\rm e}^{-\chi {\hat N_t}/{t}} \big] \to \\
	\langle \varphi{\rm e}^{-\theta \cdot}, \tilde{\varphi} \rangle\langle \varphi{\rm e}^{-\mu \cdot}, \tilde{\varphi} \rangle\langle \varphi{\rm e}^{-\lambda \cdot}, \tilde{\varphi} \rangle s^2 \int_0^\infty p_{a,u}(x){\rm e}^{-\rho x}{\rm e}^{\tfrac{2(s-1)}{(1-a)\Sigma (1,\tilde{\varphi})}x} \d x, 
\end{multline*}
where $p_{a,u}$ is the density of $N$ (with law as described in the second bullet point). It is easy to check using the explicit expressions for the Laplace transforms of Poisson and Gamma random variables, that the right-hand side above is exactly the joint Laplace transform of our desired limit $(\xi, \xi',\xi'', N, \hat{N})$. Thus, it only remains to justify \eqref{E:6bitsconverge}.

\medskip

To this end, let us write $\cN_{at}^1$ for the collection of particles alive at time $at$ that have branched off the first spine between times $0$ and $at$ (depicted in orange in Figure \ref{fig:sketch}). We also write $\cN_{at}^2$ for those particles alive at time $at$ that have branched off the second spine between times $ut$ and $at$ (depicted in green in Figure \ref{fig:sketch}). Then, using Lemma \ref{L:Yaglom} (and its extension Remark \ref{R:Yaglom}), it follows that 
\begin{equation}
(\xi_{ut}^1, e_t(\xi_{at}^1),\frac1t{|\cN_{at}^1|}, \frac{1}{t}{\sum_{v\in \cN_{at}^1} g_t(X_{v}(at))})\Rightarrow (\xi, 0, N', \tfrac{2(1-s)}{\Sigma(1-a)\langle 1, \tilde\varphi \rangle } N') 
\label{LHSabove}
\end{equation}
say, where $(\xi, N')$ are independent, $\xi$ has law given in \eqref{eq:law-xi} and $N'$ has the law of $a$ times a $\mathtt{Gamma}(2, 2/\Sigma \langle 1, \tilde\varphi\rangle)$ random variable (recall that $e_t(x)$ is deterministically uniformly bounded over $t,x$ and converges pointwise to $0$ on $E$). Now, given all the information in the quadruple displayed on the left-hand side of \eqref{LHSabove},  the (conditional) joint law of \[(\xi_{at}^2,\frac1t{|\cN_{at}^2|}, \frac{1}{t}{\sum_{v\in \cN_{at}^2} g_t(X_{v}(at))})\] is given by the $\mathbb{Q}_{\delta_X}^1$ law of $(\xi_{(a-u)t}^2, N_{(a-u)t}/t, X_{(a-u)t}[g_t]/t)$, where the (conditional) law of $X$ is explicit but not required here. Again, by Lemma \ref{L:Yaglom}, in particular part (c), 
\[
(\xi_{(a-u)t}^2, N_{(a-u)t}/t, X_{(a-u)t}[g_t]/t) \Rightarrow (\hat{\xi}, \hat{N},\tfrac{2(1-s)}{\Sigma(1-a)\langle 1, \tilde\varphi \rangle } \hat{N} )
\]
as $t \to \infty$ under $\mathbb{Q}_{\delta_X}^1$, where $\hat{\xi},\hat{N}$ are independent,  $\hat{\xi}$ has law given by \eqref{eq:law-xi}, and $\hat{N}$ has the law of $(a-u)$ times a $\mathtt{Gamma}(2,2/\Sigma\langle 1 ,\tilde{\varphi} \rangle))$ random variable, independently of the value of $X$. Putting these observations together, plus the fact that $E_t\in [0,cN_{at}/t^2]$ for some deterministic $c$, gives us \eqref{E:6bitsconverge}.
 
\medskip

{\bf Step 6}
Using the convergence in law {(and associated notation for limiting variables)} from Step 5, plus boundedness of the functionals in question, we deduce that for each $0\le u \le a$,
	\begin{align}
	 &\frac{\Sigma}{2} \int_0^a \d u F(u) \hat{\mathbb{Q}}_{\delta_x,ut}^2\bigg[ \frac{\beta(\xi^1_{ut}) \varphi(\xi^1_{ut})(\sm_2(\xi^1_{ut})-\sm_1(\xi^1_{ut}))}{\varphi(\xi^1_t)\varphi(\xi^2_{at})} \frac{t^2
			\mathbf{1}_{A_t^\delta}
		}{N_{at}\hat{N}_{t}}
		\bigg]\notag\\
		&{\to}\frac{\Sigma}{2} \int_0^a \d u F(u)\mathbb{E}\left[ \frac{\beta(\bar\xi) \varphi(\bar\xi)(\sm_2(\bar\xi)-\sm_1(\bar\xi))}{\varphi(\bar\xi')\varphi(\bar\xi'')} \mathbf{1}_{\{\varphi(\bar\xi)\ge\delta, \varphi(\bar\xi')\ge\delta, \varphi(\bar \xi'')\ge\delta\}} \frac{\mathbf{1}_{\{N\ge\delta, \hat{N}\ge \delta\}} }{N\hat{N}}\right], \label{eq:step6}
		\end{align}
		as ${t \to \infty}$.  The expectation in the integrand on the right-hand side of \eqref{eq:step6} is equal to
		\begin{align}\label{Qxu2}
%		&  \mathbb{E}\left[ \frac{(\beta \varphi(\sm_2-\sm_1))(\xi)}{\varphi(\xi')\varphi(\xi^2_{at})} \mathbf{1}_{\varphi(\xi)\ge\delta, \varphi(\xi')\ge\delta, \varphi(\xi'')\ge\delta} \frac{\mathbf{1}_{N\ge\delta, \hat{N}\ge \delta} }{N\hat{N}}\right] \nonumber \\
		&c_\delta \mathbb{E}\left[\frac{\mathbf{1}_{\{N\ge\delta\}}}{N} \mathbb{E}\left[\frac{\mathbf{1}_{\{\hat{N}\ge\delta\}}}{\hat{N}} \, \bigg| \, N\right] \right]\nonumber \\
		& = c_\delta \mathbb{E}\left[\frac{\mathbf{1}_{\{N\ge\delta\}}}{N} \left(\mathbb{E}\left[\frac{1}{\hat{N}} \, \bigg| \, N\right]- \mathbb{E}\left[\frac{\mathbf{1}_{\{\hat{N}<\delta\}}}{\hat{N}} \, \bigg| \, N\right]\right) \right]\nonumber \\
		& =		{c_\delta
		}\mathbb{E} \left[ \frac{\mathbf{1}_{\{N\ge\delta\}}}{N} \frac{1-\exp(-\tfrac{1}{1-a}(\tfrac{2}{\Sigma\langle 1,\tilde{\varphi}\rangle} N))}{N} \right]-c_\delta\mathbb{E} \left[ \frac{\mathbf{1}_{\{N\ge\delta\}}}{N}\mathbb{E}\left[\frac{\mathbf{1}_{\{\hat{N}<\delta\}}}{\hat{N}} \, \bigg| \, N\right] \right]\nonumber \\
	& =		{c_\delta
	}\mathbb{E} \left[  \frac{1-\exp(-\tfrac{1}{1-a}(\tfrac{2}{\Sigma\langle 1,\tilde{\varphi}\rangle} N))}{N^2} \right]-c_\delta\mathbb{E} \left[ \frac{\mathbf{1}_{\{N<\delta\}}}{N} \frac{1-\exp(-\tfrac{1}{1-a}(\tfrac{2}{\Sigma\langle 1,\tilde{\varphi}\rangle} N))}{N} \right]  \notag\\
		&\hspace{8cm}- c_\delta\mathbb{E} \left[ \frac{\mathbf{1}_{\{N\ge\delta\}}}{N}\mathbb{E}\left[\frac{\mathbf{1}_{\{\hat{N}<\delta\}}}{\hat{N}} \, \bigg| \, N\right] \right]\nonumber \\
		& =	:		\frac{4c_\delta
		}{a^2\Sigma^2\langle 1,\tilde{\varphi}\rangle^2
	} \mathbb{E} \left[ \frac{1-\exp(-\tfrac{a}{1-a}(\tfrac{2}{a\Sigma\langle 1,\tilde{\varphi}\rangle} N))}{(\tfrac{2}{a\Sigma\langle 1,\tilde{\varphi}\rangle }N)^2} \right] - h(\delta)
	\end{align}
	as $t\to \infty$, where  under $\mathbb{E}$, $(\bar\xi,\bar\xi',\bar\xi'',N,\hat{N})$ are described in Step 5 (recalling in particular the law of $\hat{N}$ given $N$) and $h(\delta)\ge 0$ {and $
		c_\delta:=\langle {\mathbf{1}_{\{\varphi\ge \delta\}}}, \tilde{\varphi}\rangle^2
		\langle \beta\varphi^2 (\sm_2-\sm_1)\mathbf{1}_{\{\varphi\ge \delta\}}, \tilde{\varphi}\rangle.
		$}

	\medskip
	
	{\bf Step 7} 
	Recall that by \eqref{E:m22goal} and \eqref{integral_asymptotic}, our aim is to prove that 
	\begin{equation*}
	\frac{\Sigma}{2} \int_0^a \d u F(u) \hat{\mathbb{Q}}_{\delta_x,ut}^2\bigg[ \frac{\beta(\xi^1_{ut}) \varphi(\xi^1_{ut})(\sm_2(\xi^1_{ut})-\sm_1(\xi^1_{ut}))}{\varphi(\xi^1_t)\varphi(\xi^2_{at})} \frac{t^2
			\mathbf{1}_{A_t^\delta}
		}{N_{at}\hat{N}_{t}}
		\bigg]
		\to
		\frac{c_\delta}{\langle 1, \tilde{\varphi}\rangle^2\Sigma} 
			\int_0^a F(u) f_a^\delta(u) \d u 
			\end{equation*}
	as $t\to \infty$, 
%	where 
%		$
%		c_\delta:=\langle {\mathbf{1}_{\{\varphi\ge \delta\}}}, \tilde{\varphi}\rangle^2
%		\langle \beta\varphi^2 (\sm_2-\sm_1)\mathbf{1}_{\{\varphi\ge \delta\}}, \tilde{\varphi}\rangle
%		$
%		and 
		for some $f_a^\delta(u)\nearrow f_a(u)$ as $\delta\searrow 0$, pointwise on $[0,a]$.
	
	\medskip

	First notice that $h(\delta)$ in \eqref{Qxu2}  converges to $0$ as $\delta\searrow 0$, since the law of $N\hat{N}$ has negative moments of all orders. Then
	\eqref{Qxu2}, 
\eqref{eq:step6} imply the result, since writing $Y={2N}/{a\Sigma(1,\tilde{\varphi})}$ (so that $Y\sim Y'+(1-\tfrac{u}{a})Y''$ for independent $Y', Y''\sim \mathtt{Gamma}(2,1)$) we have: 
	\begin{align*}  
	& \frac{2}{a^2} \mathbb{E} \left[ \frac{1-\exp(-\tfrac{a}{1-a}(\tfrac{2}{a\Sigma(1,\tilde{\varphi})} N))}{(\tfrac{2}{a\Sigma(1,\tilde{\varphi})}N)^2} \right] \\
	& = \frac{2}{a^2} \int_0^\infty \mathbb{E}\left[ \theta \exp(-\theta Y) - \theta \exp(-(\theta + \tfrac{a}{1-a})Y) \right] \, d\theta \\
		& = \frac{2}{a^2} \int_0^\infty \left(\frac{\theta}{(1+\theta)^2(1+(1-\tfrac{u}{a})\theta)^2} - \frac{\theta}{(1+\theta+\tfrac{a}{1-a})^2(1+(1-\tfrac{u}{a})(\theta+\tfrac{a}{1-a}))^2} \right)\d \theta \\
		& = f_a(u).
	\end{align*}
To calculate the integral in the penultimate line, we have used the change of variables $x = \theta + \frac{a}{1-a}$ for the second integrand, the fact that the anti-derivative of $\frac{y}{(1+y)^2(1+\gamma y)^2}$ is given by
\[
  \frac{1}{(\gamma - 1)^3}\left(\frac{(\gamma - 1)(\gamma y + y +2)}{(y+1)(\gamma y + 1)} - (\gamma + 1)\log\big(\frac{y + 1}{\gamma y + 1}\big) \right),
\]
and that the anti-derivative of $\frac{1}{(1+y)^2(1+\gamma y)^2}$ is given by 
\[
\frac{1}{(\gamma - 1)^3}\left(\frac{-(\gamma - 1)(2\gamma y + \gamma +1)}{(y + 1)(\gamma y+1)}  + 2\gamma\log\big(\frac{y + 1}{\gamma y + 1}\big) \right).
\]
The proof is now complete.
\end{proof}

\begin{proof}[Proof of Proposition \ref{prop:joint-conv}]
The proof of this proposition is contained in the proof of Step 5 above, ignoring the contribution from the second spine. 
\end{proof}

\paragraph{Acknowledgements}

{This work was partially supported by by EPSRC grant EP/W026899/1, UKRI Future Leader's Fellowship MR/W008513/1, and the “PHC Alliance” programme (project number: 47867UJ), funded by the French Ministry for Europe and Foreign Affairs, the French Ministry for Higher Education and Research and the UK Department for Business, Energy and Industrial Strategy.}

\bibliography{references}{}
\bibliographystyle{plain}
\end{document}